\theoremstyle{plain}
\newtheorem{thm}{Theorem}[section]
\newtheorem{lem}[thm]{Lemma}
\newtheorem{cor}[thm]{Corollary}
\newtheorem*{thm*}{Theorem}
\theoremstyle{definition}
\newtheorem{rem}[thm]{Remark}
\newcommand{\set}[1]{\left\{ #1\right\}}
\newcommand{\Z}{\mathbb{Z}}
\newcommand{\R}{\mathbb{R}}
\renewcommand{\emptyset}{\varnothing}
\DeclareMathOperator{\conv}{\mathrm{conv}}
\DeclareMathOperator{\diam}{\mathrm{diam}}
\begin{document}

\title[Colorful Borsuk--Ulam theorems]{Colorful Borsuk--Ulam theorems and applications}


\author[F. Frick \and Z. Wellner]{Florian Frick \and Zoe Wellner}

\address[FF, ZW]{Dept. Math. Sciences, Carnegie Mellon University, Pittsburgh, PA 15213, USA}
\email{\{ffrick, zwellner\}@andrew.cmu.edu}

\thanks{FF and ZW were supported by NSF CAREER Grant DMS 2042428.}

\date{September 25, 2023}
\maketitle



\begin{abstract}
\small 
We prove a colorful generalization of the Borsuk--Ulam theorem and derive colorful consequences from it, such as a colorful generalization of the ham sandwich theorem. Even in the uncolored case this specializes to a strengthening of the ham sandwich theorem, which given an additional condition, contains a result of B\'{a}r\'{a}ny, Hubard, and Jer\'{o}nimo on well-separated measures as a special case. We prove a colorful generalization of Fan's antipodal sphere covering theorem, we derive a short proof of Gale's colorful KKM theorem, and we prove a colorful generalization of Brouwer's fixed point theorem.
Our results also provide an alternative between Radon-type intersection results and KKM-type covering results. Finally, we prove colorful Borsuk--Ulam theorems for higher symmetry.
\end{abstract}

\section{Introduction}

\emph{Colorful} (or \emph{rainbow}) results are popular across combinatorics and discrete geometry. These results take the following general form: If sets $S_1, \dots, S_n$ have property~$P$, then there is a transversal with property~$P$. Here a \emph{transversal} of $S_1,\dots, S_n$ is a set $\{s_1,\dots, s_n\}$ with $s_i \in S_i$. For example, B\'ar\'any's colorful Carath\'eodory theorem \cite{Barany1981} states that if $S_1,\dots, S_{d+1} \subset \R^d$ satisfy $0 \in \conv S_i$ for all~$i \in [d+1] = \{1,\dots, d+1\}$, then there is a transversal $S$ with $0 \in \conv S$. The ``non-colorful'' case $S_1 = \dots = S_{d+1}$ reduces to Carath\'eodory's theorem \cite{Caratheodory1911} that if $0$ is in the convex hull of $S \subset \R^d$, then some convex combination of at most $d+1$ elements of $S$ is equal to~$0$.

Other colorful results in geometry include colorful Helly theorems~\cite{Barany1981, kalai2005}, Gale's colorful generalization of the KKM theorem \cite{Gale1984}, and colorful versions of Tverberg's theorem~\cite{barany1989, zivaljevic1992, blagojevic2015}. Prominent examples in combinatorics include results on rainbow arithmetic progressions~\cite{jungic2003, axenovich2004}, rainbow matching results (such as~\cite{AharoniBerger2009}) and rainbow Ramsey results (see~\cite{fujita2010} for a survey) among several others.

Topological methods have proven to be a powerful tool in attacking combinatorial and discrete-geometric problems~\cite{bjorner1995, deLongueville2013, kozlov2008, Matousek2003book}. Among the standard techniques are fixed point theorems (an early example is Nash's result on equilibria in non-cooperative games~\cite{Nash51})
and equivariant methods such as the Borsuk--Ulam theorem (see for example~\cite{Matousek2003book, blagojevic2017}), which states that any continuous map $f\colon S^d \to \R^d$ from the $d$-sphere~$S^d$, which is \emph{odd} (i.e., $f(-x) = -f(x)$ for all~$x$), has a zero. 

Here we ask: \emph{Can colorful results be lifted to colorful topological methods?} For fixed point theorems this is true in quite some generality~\cite{shih1993, FrickHoustonEdwardsMeunier2017, FrickZerbib2019}. There is an abundance of generalizations of the Borsuk--Ulam theorem; see for example~\cite{Fan1952, MeunierSu2019, Dold1983, Bourgin1955, yang1954}.
Here we prove a colorful generalization of the Borsuk--Ulam theorem. We introduce one piece of terminology: We say that a matrix $A\in \R^{d \times d}$ has \emph{rows in intersecting cube facets} if for any two distinct rows $a$ and $b$ of $A$ whenever there is a $j \in [d]$ such that $|a_j| \ge |a_i|$ and $|b_j| \ge |b_i|$ for all $i \in [d]$, then $a_jb_j > 0$. That is, if the largest entries in absolute value in two different rows occur in the same column, then they cannot have opposite sign. If we normalize all rows to have sup-norm~$1$, that is, to lie on the boundary of the hypercube in~$\R^d$, then $A$ has rows in intersecting cube facets if no two rows lie strictly within vertex-disjoint facets of the hypercube. We can now state our colorful Borsuk--Ulam theorem:

\begin{thm}
\label{thm:colorfulBU}
   Let $f \colon S^d \to \R^{(d+1) \times (d+1)}$ be an odd and continuous map. Then there is an ${x\in S^d}$ such that either $f(x)$ does not have rows in intersecting cube facets or there is a permutation $\pi$ of $[d+1]$ such that $f(x)_{\pi(i)i}$ is non-negative and $|f(x)_{\pi(i)i}| \ge |f(x)_{\pi(i)j}|$ for all~$i, j \in [d+1]$.
\end{thm}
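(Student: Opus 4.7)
The approach is to argue by contradiction using the classical Borsuk--Ulam theorem. Suppose that for every $x\in S^d$ neither alternative in the conclusion holds, so $f(x)$ has rows in intersecting cube facets but admits no permutation $\pi$ satisfying the stated sign and max-abs conditions. Applying the hypothesis at the antipode $-x$ and using $f(-x)=-f(x)$ shows that the sign-reversed version (with $\le 0$ replacing $\ge 0$) also fails everywhere: if such a $\pi$ worked for $f(-x) = -f(x)$, then $\pi$ would realize the original conclusion for $f(x)$. Hence $f(S^d)$ is contained in the $\Z/2$-invariant set $C$ of matrices that satisfy condition~(a) and admit neither a ``positive'' nor a ``negative'' dominant-diagonal permutation.

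Under~(a), each column $i$ that is a max-absolute column of some row has a well-defined common sign $\sigma_i\in\{\pm 1\}$ shared by all such rows; the intersecting cube facets hypothesis is precisely the statement that this sign is consistent, and in particular precludes same-column cancellations of row entries with opposite signs. Membership in $C$ then forces, at every $x$, one of three combinatorial failure modes: (i) the signs $\sigma_i$ are mixed across active columns, (ii) some column is inactive (no row's max-abs lies there), or (iii) Hall's matching condition fails on the bipartite max-abs graph in the relevant sign direction. The task is to detect each of these modes by a suitable odd continuous functional.

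The crux will be to assemble these obstructions into an odd continuous map $\Phi\colon S^d \to \R^d$ that is nowhere zero, contradicting Borsuk--Ulam. A natural template is the projection to $\mathbf{1}^{\perp}\cong \R^d$ of an $\R^{d+1}$-valued odd column functional, for example the ``column max--min excess'' $M \mapsto \bigl(\max_j(M_{ji}-\max_{k\ne i}|M_{jk}|) + \min_j(M_{ji}+\max_{k\ne i}|M_{jk}|)\bigr)_i$, which is manifestly antisymmetric in $M$. The main obstacle will be to verify that failure modes (i)--(iii) collectively preclude $\Phi$ from vanishing on $f(S^d)$: any zero of $\Phi\circ f$ at some $x$, together with~(a) at $x$, must actually yield the desired permutation. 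I expect this final step to proceed by case analysis on the failure mode, using the sign-consistency provided by~(a) to control cancellations in the max--min sums, after an antipodal substitution that makes the common value at the Borsuk--Ulam zero non-negative; a refinement of the functional may be needed to rule out mode~(iii) in the presence of ties.
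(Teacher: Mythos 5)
Your proposal attempts a genuinely different route from the paper. The paper proves the theorem by first establishing a colorful generalization of Ky Fan's sphere-covering theorem (Theorem~\ref{colkf}) and then applying it to the family of closed sets $A_i^{(j)}=\{x : f(x)_{ji}=|f(x)_{ji}|\ge |f(x)_{j\alpha}|\ \forall\alpha\}$. Crucially, the permutation $\pi$ in the paper's conclusion is \emph{produced} by the combinatorics of Theorem~\ref{colkf}, which in turn comes from labeling the barycentric subdivision of a fine triangulation: a facet of the subdivision contains exactly one vertex subdividing a face of each dimension $0,\dots,d$, and this dimension-to-label assignment is what yields a row–column transversal. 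You instead attempt to bypass all of this and derive the theorem directly from the classical Borsuk--Ulam theorem by constructing a single odd map $\Phi\colon S^d\to\R^d$ whose zeros detect the conclusion.

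The gap in your proposal is precisely the step you flag as ``the crux.'' You correctly observe that the set $C$ of bad matrices is $\Z/2$-invariant and propose the odd column functional $\Phi$, but you do not verify (and it is far from clear) that $\Phi$ vanishes on a matrix $M$ with rows in intersecting cube facets only if $M$ admits a dominant-diagonal permutation of one sign or the other. This is not a routine technicality: the existence of the permutation is a Hall-type matching condition on the bipartite graph of rows versus max-absolute columns, and a single $\R^d$-valued odd functional built from columnwise extrema discards the pairing structure needed to certify such a matching. You acknowledge this by writing that failure mode~(iii) (Hall failure) may require ``a refinement of the functional,'' but no refinement is given, and it is not evident that one exists in this framework. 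The paper sidesteps exactly this difficulty by using the full strength of Ky Fan's theorem applied to barycentric subdivisions — which contains strictly more information than any map $S^d\to\R^d$ — to conjure the permutation combinatorially. As written, your argument does not constitute a proof; the central verification is missing and the proposed functional has not been shown to do what you need it to do.
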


Thus for any odd map from the $d$-sphere to $\R^{(d+1) \times (d+1)}$ one matrix $A$ in the image either has rows that are ``almost opposite'' or the maximal absolute values of entries of $A$ in each row form a row-column transversal and these entries are non-negative. Here we think of the rows of $f(x)$ as $d+1$ odd maps $S^d \to \R^{d+1}$. Theorem~\ref{thm:colorfulBU} is indeed a colorful generalization of the Borsuk--Ulam theorem: Let $f \colon S^d \to \R^d$ be an odd and continuous map. Let $\widehat f \colon S^d \to \R^{d+1}$ be the map obtained from $f$ by appending a zero in the last coordinate. Define $F\colon S^d \to \R^{(d+1) \times (d+1)}$ to have all rows equal to~$\widehat f$. By Theorem~\ref{thm:colorfulBU} there is an $x\in S^d$ such that either for some $j \in [d+1]$ we have that $|\widehat f(x)_j| \ge |\widehat f(x)_\ell|$ for all $\ell \in [d+1]$ and $\widehat f(x)_j$ is both non-negative and non-positive, which implies $\widehat f(x) = 0$, or there is a permutation $\pi$ of $[d+1]$ such that $F(x)_{\pi(i) i}$ is largest in absolute value in row $\pi(i)$ for all~$i$. Since all rows of $F$ are equal to $\widehat f$ this means that each entry $\widehat f(x)_j$ is maximal in absolute value among the coordinates of~$\widehat f(x)$, so these absolute values are all equal. Since $\widehat f(x)_{d+1} = 0$, we have that $\widehat f(x) = 0$.

Here we collect consequences and generalizations of Theorem~\ref{thm:colorfulBU}:

\begin{compactenum}[(a)]
    \item The classical Ham Sandwich theorem, conjectured by Steinhaus and proved by Banach (see~\cite{beyer2004, StoneTukey1942}), asserts that for any $d$ probability measures on $\R^d$ with continuous density, there is an affine hyperplane~$H$ that bisects all probability measures, that is, both halfspaces of $H$ have measure~$\frac12$ in all $d$ probability measures. We prove a colorful generalization of the ham sandwich theorem; see Theorem~\ref{colHS}. When we specialize this to the non-colorful case, we derive a strengthening of the ham sandwich theorem (Theorem~\ref{kfhs}), which contains a close variant of a measure partition result due to B\'{a}r\'{a}ny, Hubard, Jer\'{o}nimo~\cite{BaranyHubardJeronimo2008} on well-separated measures as a special case; see Corollary~\ref{cor:bhj}.
    \item Fan proved that if $A_1, \dots, A_{d+1} \subseteq S^d$ are closed such that $A_i \cap (-A_i) = \emptyset$ for all $i \in [d+1]$ and $S^d = \bigcup_i A_i \cup(-A_i)$, then $A_1 \cap (-A_2) \cap A_3 \cap \dots \cap (-1)^d A_{d+1} \ne \emptyset$. We establish a colorful generalization; see Theorem~\ref{colkf}.
    \item The Borsuk--Ulam theorem easily implies Brouwer's fixed point theorem, which asserts that any continuous self-map of a closed $d$-ball has a fixed point. 
    Similarly, the colorful Borsuk--Ulam theorem implies a colorful Brouwer's fixed point theorem; see Theorem~\ref{thm:colBrouwer}. We give a simple proof of Gale's colorful KKM theorem~\cite{Gale1984} as a consequence of our main result; see Corollary~\ref{cor:colKKM}. Again, the uncolored version is stronger than the classical KKM theorem about set coverings of the $d$-simplex~$\Delta_d$. In this case, we obtain an alternative between KKM results and the topological Radon theorem, which unifies both of these results; see Theorem~\ref{thm:RadonKKM}.
    \item We give a generalization of Fan's result to $\Z/p$-symmetry for $p$ a prime; see Theorem~\ref{thm:p-fold-kyfan}. We derive a generalization of the Bourgin--Yang $\Z/p$-Borsuk--Ulam theorem (Corollary~\ref{cor:kyfan-dold}) and the corresponding set covering variant (Theorem~\ref{thm:p-cover}). We then prove a colorful generalization of Theorem~\ref{thm:p-cover}, which in the special case $p=2$ gives our earlier colorful generalization of Fan's set covering result; see Theorem~\ref{thm:col-p-kyfan}.
\end{compactenum}

Meunier and Su already proved a colorful generalization of Fan's theorem \cite{MeunierSu2019}, which also exhibits a colorful Borsuk--Ulam phenomenon. Their generalization is different from ours and neither easily implies the other. We will discuss the differences after stating our colorful generalization of Fan's theorem (Theorem~\ref{colkf}).

\section{Preliminaries}

In this section we collect a few definitions used throughout. We refer to Matou\v sek's book~\cite{Matousek2003book} for further details.
A \emph{simplicial complex} $\Sigma$ is a collection of finite sets closed under taking subsets. We refer to the ground set $\bigcup_{\sigma \in \Sigma} \sigma$ as the \emph{vertex set} of~$\Sigma$. Elements $\sigma \in \Sigma$ are called \emph{faces}; inclusion-maximal faces are \emph{facets} and two-element faces are \emph{edges}. For a simplicial complex $\Sigma$ on vertex set~$V$ we denote its \emph{geometric realization} by $|\Sigma| = \bigcup_{\sigma \in \Sigma} \conv\{e_v \ : \ v\in \sigma\} \subseteq \R^V$, where $e_v$ denote the standard basis vectors of~$\R^V$ and $\conv(X)$ denotes the convex hull of the set~$X$. The simplicial complex of all subsets of $[n] = \{1,2,\dots,n\}$ is the \emph{$(n-1)$-simplex}~$\Delta_{n-1}$. For ease of notation we will denote the geometric realization of $\Delta_{n-1}$ also by~$\Delta_{n-1}$. Observe that $\Delta_{n-1}$ is the convex hull of the standard basis vectors in~$\R^n$. We call $\Sigma$ a \emph{triangulation} of $S^d$ if $\Sigma$ is a simplicial complex whose geometric realization is homeomorphic to~$S^d$.

Let $\Sigma$ and $\Sigma'$ be simplicial complexes on vertex sets $V$ and~$V'$, respectively. A map $\varphi\colon V \to V'$ is \emph{simplicial} if for every $\sigma \in \Sigma$ we have that $\varphi(\sigma) \in \Sigma'$. In this case we write $\varphi \colon \Sigma \to \Sigma'$. By convex interpolation any simplicial map induces a continuous map $|\Sigma| \to |\Sigma'|$.

A triangulation $\Sigma$ of $S^d$ is \emph{centrally symmetric} if there is a simplicial map $\iota \colon \Sigma \to \Sigma$ and homeomorphism $h \colon |\Sigma| \to S^d$ such that for every $x\in |\Sigma|$ we have that $h(\iota(x)) = -h(x)$. The smallest centrally symmetric triangulation of $S^d$ is given by the (boundary of the) \emph{crosspolytope}~$\partial\Diamond_{d+1}$, the simplicial complex on $V = \{1, \dots, d\} \cup \{-1, \dots, -d\}$, where $\sigma \subseteq V$ is a face of~$\partial\Diamond_{d+1}$ if for every $i \in \sigma$ we have that $-i \notin \sigma$. The geometric realization $|\partial\Diamond_{d+1}|$ can be realized in~$\R^{d+1}$ with convex faces by taking the boundary of the convex hull of~$\{\pm e_1, \dots, \pm e_{d+1}\}$. 

A simplicial map $\iota \colon \Sigma \to \Sigma$ induces a \emph{$\Z/p$-action} if the $p$-fold composition $\iota^p$ is the identity. In this case $\Sigma$ is a \emph{$\Z/p$-equivariant triangulation}. For $s \in \Z/p$ and $x \in |\Sigma|$ we write $s\cdot x$ for~$\iota^s(x)$. The $\Z/p$-action is \emph{free} if $s\cdot x \ne x$ for all $s \in \Z/p \setminus \{0\}$ and all $x \in |\Sigma|$. Given two spaces $X$ and $Y$ (homeomorphic to simplicial complexes) with $\Z/p$-actions, a map $f\colon X \to Y$ is \emph{$\Z/p$-equivariant} if $f(s\cdot x) = s\cdot f(x)$ for all $s\in \Z/p$ and all $x\in X$.

A map $f\colon S^d \to \R^n$ is \emph{antipodal} or \emph{odd} if $f(-x) = -f(x)$ for all $x \in S^d$. We reserve the term \emph{map} for a continuous function. The same definition applies for a map $f\colon S^d \to \R^{n \times m}$ to the space of $(n \times m)$-matrices. For any such map, we write $f_i \colon S^d \to \R^m$ for the map to the $i$th row of~$f$ and $f_{ij} \colon S^d \to \R$ for the map to the entry in row~$i$ and column~$j$ of~$f$. The \emph{degree} $\deg f$ of a map $f \colon S^d \to S^d$ is the integer $k$ such that the induced map on top homology $f_*\colon H_d(S^d) \cong \Z \to H_d(S^d) \cong \Z$ is multiplication by~$k$. The Borsuk--Ulam theorem~\cite{Borsuk1933} can be stated in various forms; here we collect three such statements:

\begin{thm}[Borsuk--Ulam theorem]
\label{thm:BU}
    ~
    \begin{compactenum}[(a)]
        \item Any odd map $f\colon S^d \to \R^d$ has a zero. 
        \item Any odd map $f\colon S^d \to S^d$ has odd degree.
        \item For any odd map $f\colon S^d \to \R^{d+1}$ there is an $x \in S^d$ such that all coordinates of~$f(x)$ are the same.
    \end{compactenum}
\end{thm}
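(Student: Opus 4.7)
The plan is to reduce the three formulations to a single statement and then prove that one. The equivalence (a)~$\Leftrightarrow$~(c) is immediate: from odd $f\colon S^d \to \R^d$, appending a zero last coordinate produces $\widehat f\colon S^d \to \R^{d+1}$; applying (c) yields a point where all coordinates of $\widehat f$ coincide, and since the last is zero, $f$ vanishes there. Conversely, from odd $g\colon S^d \to \R^{d+1}$, the differences $(g_i - g_{d+1})_{i=1}^d$ form an odd map $S^d \to \R^d$ whose zero witnesses (c). For (b)~$\Rightarrow$~(a), a nonvanishing odd $f\colon S^d \to \R^d$ radially normalizes to an odd map $S^d \to S^{d-1}$, and postcomposition with an equatorial inclusion $S^{d-1} \hookrightarrow S^d$ produces an odd self-map of $S^d$ missing an open hemisphere, hence of degree zero, contradicting (b). The reverse implication (a)~$\Rightarrow$~(b) follows by applying (a) to $f - v$ for a generic regular value $v$ and a mod-$2$ count of preimages on either hemisphere.

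For the core statement (a), I would give a combinatorial proof via Tucker's lemma. Fix an odd continuous $f\colon S^d \to \R^d$ and assume for contradiction that $f$ never vanishes. Choose a centrally symmetric triangulation $T$ of $S^d$ of mesh at most $\varepsilon$; after a small odd perturbation of $f$ we may assume that for every vertex $v \in V(T)$ the coordinate of $f(v)$ of largest absolute value is uniquely determined. Define $\lambda\colon V(T) \to \{\pm 1, \dots, \pm d\}$ by $\lambda(v) = \sign(f(v)_{j(v)}) \cdot j(v)$, where $j(v)$ is this unique index; the labeling is antipodal because $f$ is odd. Tucker's lemma then produces an edge $\{u, v\}$ of $T$ with $\lambda(u) = -\lambda(v)$, so that $f(u)$ and $f(v)$ share a dominant coordinate in which they have opposite signs, while $\|u-v\| < \varepsilon$. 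Letting $\varepsilon \to 0$ and passing to a convergent subsequence yields a point $x_\infty \in S^d$ at which some coordinate of $f$ is simultaneously non-negative and non-positive, forcing $f(x_\infty) = 0$.

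The main obstacle is Tucker's lemma itself, which packages the $\Z/2$-equivariance lying at the heart of Borsuk--Ulam. I would prove it by induction on $d$, closely paralleling the topological content: an antipodal labeling of a centrally symmetric triangulation of $S^d$ with values in $\{\pm 1, \dots, \pm d\}$ either produces a complementary edge already on the equatorial $S^{d-1}$, via the inductive hypothesis applied to the restricted labeling, or it does not---in which case a parity path-following argument on $d$-simplices of one hemisphere, tracking simplices whose vertex labels realize the set $\{-1, 1, 2, \dots, d\}$, exhibits two such simplices related by the antipodal map and yields the desired complementary edge on the boundary. All other steps---the equivalences above and the analytic passage to the limit---are routine.
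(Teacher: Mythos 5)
The paper does not actually prove Theorem~\ref{thm:BU}: it cites Borsuk (1933) and only remarks on how the three formulations relate, namely that (b)~$\Rightarrow$~(a) and (a)~$\Leftrightarrow$~(c) via the quotient by the diagonal $D\subseteq\R^{d+1}$. Your proposal goes further and supplies a full proof of (a) via Tucker's lemma. That part is sound and is indeed one of the standard routes; in fact Theorem~\ref{newkf} of the paper is essentially the Tucker/Ky Fan lemma you invoke (proved there \emph{from} the Borsuk--Ulam theorem rather than the other way around), so the two developments are inverse to one another. A small clean-up: in the passage to the limit you should note that the index $j$ of the dominant coordinate may be taken constant along a subsequence, and that its being dominant is what upgrades $f(x_\infty)_j=0$ to $f(x_\infty)=0$; as written you jump to the conclusion.

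The genuine gap is the claimed implication (a)~$\Rightarrow$~(b). ``Applying (a) to $f-v$'' is not well-formed: for $f\colon S^d\to S^d$ the map $f-v$ lands in $\R^{d+1}$, not $\R^d$, and projecting onto $v^\perp$ and applying (a) only shows that $v$ or $-v$ is hit (i.e.\ surjectivity, Corollary~\ref{cor:surj}), not that the degree is odd. A ``mod-$2$ count of preimages on either hemisphere'' does not obviously close this gap: oddness pairs $f^{-1}(v)\cap D^d_-$ with $f^{-1}(-v)\cap D^d_+$, which says nothing about the parity of $|f^{-1}(v)|$. The standard proofs of (b) go through mod-$2$ cohomology of $\R P^d$ and a transfer argument, or through the \emph{counting} form of Ky Fan's lemma (odd number of alternating facets), not through (a) alone. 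Your own inductive path-following proof of Tucker's lemma could, if carried out carefully, give the parity statement needed for (b), but the sketch you give produces only existence of a complementary edge, and the (a)~$\Rightarrow$~(b) step as phrased is not a proof. The paper avoids this issue by only asserting (b)~$\Rightarrow$~(a); you should either drop the converse implication or replace it with a bona fide argument (cohomological, or via a parity-refined Ky Fan lemma).
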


Item~(b) implies item~(a), which is easily seen to be equivalent to the statement that any odd map $S^{d-1}\to S^{d-1}$ has nonzero degree. For item~(c) observe that for the diagonal $D = \{x\in \R^{d+1} \ : \ x_1 = x_2 = \dots = x_{d+1}\}$ the composition of $f \colon S^d \to \R^{d+1}$ with the projection $\R^{d+1} \to \R^{d+1} / D \cong \R^d$ is an odd map. This composition has a zero if and only if there is an $x \in S^d$ with $f(x) \in D$. We will use one immediate corollary of the Borsuk--Ulam theorem, which we state below. Any non-surjective map $S^d \to S^d$ has degree zero; thus we get:

\begin{cor}
\label{cor:surj}
    Any odd map $f\colon S^d \to S^d$ is surjective.
\end{cor}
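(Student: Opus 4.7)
The plan is to directly combine Theorem~\ref{thm:BU}(b) with the topological fact stated in the lead-in sentence: any non-surjective continuous map $g\colon S^d \to S^d$ has degree zero. Given an odd map $f\colon S^d \to S^d$, Theorem~\ref{thm:BU}(b) gives that $\deg f$ is odd, hence nonzero, so $f$ cannot be non-surjective. The corollary is then immediate by contrapositive.

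The only substantive step is verifying that a non-surjective self-map of $S^d$ has degree zero. If $f$ misses a point $p \in S^d$, then $f$ factors as the composition $S^d \xrightarrow{f} S^d \setminus \{p\} \hookrightarrow S^d$. Since $S^d \setminus \{p\}$ is homeomorphic to $\R^d$ (for instance, via stereographic projection from $p$), it is contractible, so $H_d(S^d \setminus \{p\}) = 0$. The induced map $f_*$ on $H_d(S^d) \cong \Z$ therefore factors through this trivial group, giving $\deg f = 0$.

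I do not anticipate any real obstacle here; both ingredients are standard and the corollary follows in one line. If one preferred to avoid invoking the degree version of Borsuk--Ulam, one could instead argue directly from part~(a): a non-surjective odd map $f\colon S^d \to S^d$ would miss some point, and by composing with an antipodally equivariant retraction of $S^d$ minus a pair of antipodes onto an equatorial $S^{d-1}$ one would produce an odd map $S^d \to S^{d-1}$, contradicting the equivalent form of item~(a). But the degree-based argument above is the shortest route.
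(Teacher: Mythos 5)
Your argument is exactly the paper's: it combines Theorem~\ref{thm:BU}(b) (odd maps have odd, hence nonzero, degree) with the observation stated just before the corollary that a non-surjective self-map of $S^d$ has degree zero. Your brief justification of that observation via factoring through the contractible punctured sphere is correct and fills in what the paper leaves implicit.
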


The Borsuk--Ulam theorem has been generalized to $G$-equivariant maps beyond $G = \Z/2$; see for example Dold~\cite{Dold1983}. Here we will need the following (see~{\cite[Cor.~2.2]{kushkuley2006}}):

\begin{lem}
    \label{lem:dold}
    For any free $\Z/p$-action on~$S^d$, any $\Z/p$-equivariant map $f\colon S^d\to S^d$ has degree $1 \ \mathrm{mod} \ p$.
\end{lem}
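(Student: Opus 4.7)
My plan is to descend $f$ to $\bar f \colon L \to L$ on the quotient $L = S^d/(\Z/p)$ and then read off the degree mod $p$ from the ring structure of $H^*(L; \mathbb{F}_p)$. Since $f$ commutes with the $\Z/p$-action, the factorization $\pi \circ f = \bar f \circ \pi$ exists uniquely, and because the quotient covering $\pi \colon S^d \to L$ is a local homeomorphism, local degrees of $f$ at regular points match those of $\bar f$ at their images, so $\deg f = \deg \bar f$. It thus suffices to prove $\deg \bar f \equiv 1 \pmod p$. I treat $d = 1$ as a separate base case, where $\Z/p$ acts on $S^1$ by rotation and equivariance $f(\zeta z) = \zeta f(z)$ forces $\deg f \equiv 1 \pmod p$ by direct winding-number computation.

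For $d \ge 2$, $S^d$ is simply connected, so $\pi_1(L) = \Z/p$ is generated by the deck-translation loop $\gamma$, namely the projection of any path in $S^d$ from a point $x$ to $s \cdot x$ for $s$ a fixed generator of $\Z/p$. Equivariance $f(s \cdot x) = s \cdot f(x)$ sends such paths to paths of the same type, giving $\bar f_* \gamma = \gamma$ and hence $\bar f_* = \id$ on $\pi_1(L)$; dually, $\bar f^* = \id$ on $H^1(L; \mathbb{F}_p) \cong \mathbb{F}_p$.

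The remaining step promotes identity on $H^1$ to identity on $H^d$ via the cohomology ring. For $p = 2$, $L = \mathbb{RP}^d$ with $H^*(\mathbb{RP}^d; \mathbb{F}_2) = \mathbb{F}_2[x]/(x^{d+1})$, $|x| = 1$, so $\bar f^* x = x$ gives $\bar f^* x^d = x^d$, i.e., $\deg \bar f \equiv 1 \pmod 2$. For $p$ odd, freeness forces $d$ odd: for each non-identity $s \in \Z/p$ the Lefschetz fixed-point theorem yields $0 = 1 + (-1)^d \deg(s)$, so $\deg(s) = (-1)^{d+1}$; since $\deg \colon \Z/p \to \{\pm 1\}$ must be trivial for $p$ odd, $(-1)^{d+1} = 1$. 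Writing $d = 2k+1$, $L$ is a lens space with $H^*(L; \mathbb{F}_p) \cong \Lambda(x) \otimes \mathbb{F}_p[y]/(y^{k+1})$, $|x| = 1$, $|y| = 2$, $y = \beta x$ for the Bockstein. Naturality of $\beta$ combined with $\bar f^* x = x$ gives $\bar f^* y = y$, and cup-product multiplicativity yields $\bar f^*(xy^k) = xy^k$ on $H^d(L; \mathbb{F}_p)$, so $\deg \bar f \equiv 1 \pmod p$. The main obstacle is this cohomological final step, which requires invoking the ring structure of $\mathbb{RP}^d$ or of lens spaces together with the freeness-forced parity of $d$; the reduction in the previous step crucially uses that $f$ is equivariant rather than merely inducing the identity on $\pi_1$ of the quotient.
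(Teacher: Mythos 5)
The paper does not prove Lemma~\ref{lem:dold}; it quotes it from \cite[Cor.~2.2]{kushkuley2006}, so there is no in-text argument to compare against. Your self-contained proof via the orbit space $L = S^d/(\Z/p)$ is a reasonable classical route and is correct in outline, but two steps are stated more casually than they deserve. First, the reduction $\deg f = \deg\bar f$ is an integer identity only when $L$ is orientable; for $p=2$ and $d$ even, $L=\mathbb{RP}^d$ is non-orientable and $\deg\bar f$ is defined only in $\Z/2$, so you should say explicitly that the local-degree bijection over a common regular value yields $\deg f\equiv\deg\bar f\pmod 2$, which is all that is needed. (Alternatively, the $\mathbb{F}_p$-homology transfer $\tau\colon H_d(L;\mathbb{F}_p)\to H_d(S^d;\mathbb{F}_p)$, which sends $[L]$ to $[S^d]$ and intertwines $\bar f_*$ with $f_*$ because lifts of simplices commute with equivariant maps, gives $\deg f\equiv\deg\bar f\pmod p$ uniformly and sidesteps orientability.) Second, the ring isomorphism $H^*(L;\mathbb{F}_p)\cong\Lambda(x)\otimes\mathbb{F}_p[y]/(y^{k+1})$ with $y=\beta x$ is standard for the classical lens spaces, but you are invoking it for the quotient of an \emph{arbitrary} free $\Z/p$-action on $S^{2k+1}$; this is true but deserves a word, e.g.\ via the Serre spectral sequence of the Borel fibration $S^{2k+1}\to L\to B\Z/p$, where finiteness of $L$ forces the transgression of the fiber class to be a unit multiple of $y^{k+1}$, so that $H^*(L;\mathbb{F}_p)$ is the stated quotient of $H^*(B\Z/p;\mathbb{F}_p)$ via the edge map. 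The remaining steps --- Lefschetz forcing $d$ odd when $p$ is odd, equivariance giving $\bar f_*=\id$ on $\pi_1(L)\cong\Z/p$ via the deck-translation loop, propagation to $H^d$ via Bockstein naturality and cup products, and the $d=1$ winding-number base case --- are sound.
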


Let $\Sigma$ be a simplicial complex on vertex set~$V$. The \emph{deleted join} $\Sigma^{*2}_\Delta$ of $\Sigma$ is a simplicial complex on vertex set $V\times \{1,2\}$, where $(\sigma_1 \times \{1\}) \cup (\sigma_2 \times \{2\})$ is a face of $\Sigma^{*2}_\Delta$ if $\sigma_1$ and $\sigma_2$ are faces of~$\Sigma$ such that $\sigma_1 \cap \sigma_2 = \emptyset$. The deleted join of the $n$-simplex is $(\Delta_n)^{*2}_\Delta = \partial\Diamond_{n+1}$ the boundary of the crosspolytope. Notice that any point $z \in |(\Delta_n)^{*2}_\Delta|$ in the geometric realization of the boundary of the crosspolytope is of the form $\lambda x + (1-\lambda) y$ for $\lambda \in [0,1]$ and $x,y \in |\Delta_n|$ that are contained in vertex-disjoint faces. This is true more generally, for points in the geometric realization of the deleted join of any simplicial complex. We surpress bars, and denote the geometric realization of the deleted join of the simplex by $(\Delta_n)^{*2}_\Delta$ for ease of notation. Thus the notation $\lambda x + (1-\lambda)y \in (\Delta_n)^{*2}_\Delta$ refers to the point in $|(\Delta_n)^{*2}_\Delta|$ determined by $\lambda \in [0,1]$ and $x,y \in |\Delta_n|$ in vertex-disjoint faces.
The \emph{$p$-fold join} $\Sigma^{*p}$ of $\Sigma$ is a simplicial complex on vertex set $V\times \{1,2,\dots, p\}$, where $(\sigma_1 \times \{1\}) \cup (\sigma_2 \times \{2\})\cup\dots\cup(\sigma_p \times \{p\})$ is a face of $\Sigma^{*p}_\Delta$ if for all~$i$, $\sigma_i$ is a face of~$\Sigma$. If we additionally require that $\sigma_i \cap \sigma_j = \emptyset$ when $i\neq j$ we get the \emph{$p$-fold deleted join}~$\Sigma^{*p}_\Delta$. The $p$-fold join of $S^d$ is homeomorphic to~$S^{p(d+1)-1}$.
The \emph{barycentric subdivision}~$\Sigma'$ of $\Sigma$ is the simplicial complex on vertex set~$\Sigma$ with $\{\sigma_1, \dots, \sigma_k\}$ is a face of~$\Sigma'$ if $\sigma_1 \subset \sigma_2 \subset \dots \subset \sigma_k$.

\begin{figure}
\centering
\begin{tikzpicture}[scale=0.6]
\tikzstyle{vert}=[circle, draw,
                       inner sep=2pt, minimum width=5pt]
\node[vert] (A) at (0,0){};
\node[vert] (B) at (3.5, -6){};
\node[vert] (C) at (-3.5, -6){};

\node (A1) at (0,1){1};
\node (B1) at (4.5, -6.5){2};
\node (C1) at (-4.5, -6.5){3};

\path[-, line width=1]
    (A) edge (B)
    (B) edge (C)
    (C) edge (A);

\end{tikzpicture}
\hspace{0.1in}
\begin{tikzpicture}[scale=0.6]
\tikzstyle{vert}=[circle, draw, inner sep=2pt, minimum width=5pt]

\node[vert] (A) at (0,0){};
\node[vert] (B) at (3.5, -6){};
\node[vert] (C) at (-3.5, -6){};
\node[vert] (D) at (0, -6){};
\node[vert] (E) at (-1.75, -3){};
\node[vert] (F) at (1.75, -3){};
\node[vert] (G) at (0, -4){};

\node (A1) at (0,1){$\{1\}$};
\node (B1) at (4.5, -6){$\{2\}$};
\node (C1) at (-4.5, -6){$\{3\}$};
\node (D1) at (0, -6.5){$\{2,3\}$};
\node (E1) at (-2.75, -3){$\{1,3\}$};
\node (F1) at (2.75, -3){$\{1,2\}$};
\node (G1) at (1.2, -4.5){$\{1,2,3\}$};

\path[-, line width=1]
    (A) edge (F)
    (F) edge (B)
    (B) edge (G)
    (G) edge (E)
    (E) edge (A)
    (A) edge (G)
    (G) edge (D)
    (D) edge (C)
    (C) edge (G)
    (G) edge (F);

\path[-, line width=1]
    (C) edge (E);

\path[-, line width=1]
    (D) edge (B);
    
\end{tikzpicture}
\caption{A filled triangle and its barycentric subdivision, which has a vertex for every face of the triangle.
}
\label{fig:barycenter}
\end{figure}
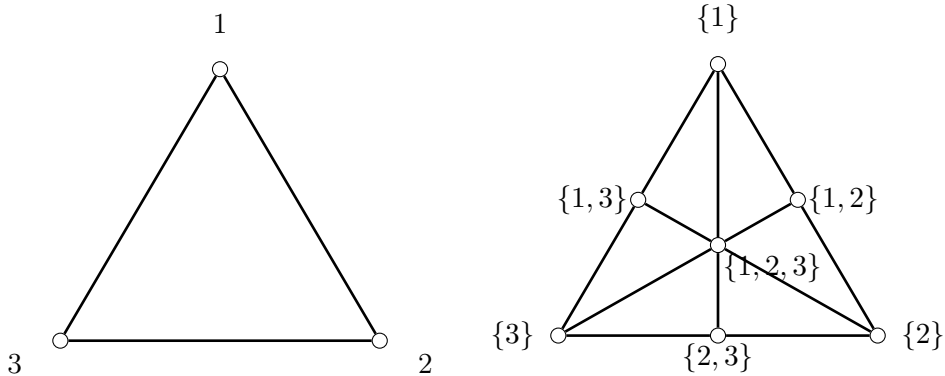

\section{Proof of the colorful Borsuk--Ulam theorem}

In 1952 Ky Fan published his ``combinatorial lemma'' generalizing Tucker's lemma \cite{Fan1952}, which is a discretized version of the Borsuk--Ulam theorem. Ky Fan's lemma applies to iterated barycentric subdivisions of the boundary of the crosspolytope and states that if the vertices of such a subdivision are labelled with the $2m$ numbers $\{\pm 1, \pm 2, \dots, \pm m\}$ such that labels of antipodal vertices sum to zero, while labels of vertices connected by an edge do not sum to zero, then the number of facets labelled $\{k_1, -k_2, k_3, \dots, (-1)^{d}k_{d+1}\}$, where $1 \le k_1 < k_2 < \dots < k_{d+1}$, is odd. Below we give a short proof of a version of Ky Fan's lemma that applies in greater generality, that is, for any centrally symmetric triangulation, while having a slightly weaker conclusion. In fact, with more care one could derive a generalization of Ky Fan's lemma from our setup below, but we will not need this generality. The proof we present here is not new; see De Loera, Goaoc, Meunier and Mustafa~\cite{LoeraGoaocMeunierMustafa2017}. Further note that generalizations of this theorem have been proven in other settings; for example, Musin proved a generalization of Fan's lemma for manifolds \cite{Musin2012} and \v{Z}ivaljevi\'{c} proved a generalization for oriented matroids \cite{Zivaljevic2010}.

\begin{thm}\label{newkf}
    Let $\Sigma$ be a centrally symmetric triangulation of~$S^d$ with vertex set~$V$. Let $\ell\colon V \to \{\pm 1, \pm 2, \dots, \pm (d+1)\}$ be a map with $\ell(-v) = -\ell(v)$ for all~$v \in V$. Fix signs $s_1, \dots, s_{d+1} \in \{-1,+1\}$. Then either $\Sigma$ has an edge $e$ with $\ell(e) = \{-j,+j\}$ for some $j \in [d+1]$ or $\Sigma$ has a facet $\sigma$ with $\ell(\sigma) = \{s_1\cdot 1, \dots, s_{d+1}\cdot (d+1)\}$.
\end{thm}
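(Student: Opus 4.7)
The plan is to use the assumption that no edge carries a label of the form $\{-j,+j\}$ to construct an odd simplicial map from $\Sigma$ to the boundary of the crosspolytope, and then invoke Corollary~\ref{cor:surj} to conclude that every facet of $\partial\Diamond_{d+1}$ — in particular the one prescribed by $s_1,\dots,s_{d+1}$ — is hit.

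Concretely, first I would dispose of the case where $\Sigma$ does contain an edge $e$ with $\ell(e) = \{-j,+j\}$, since then the first alternative of the theorem holds and there is nothing to prove. So assume no such edge exists. Next I would define a map $\varphi$ on the vertex set $V$ of $\Sigma$ by
\[
\varphi(v) \;=\; \sign(\ell(v))\cdot e_{|\ell(v)|} \;\in\; \{\pm e_1,\dots,\pm e_{d+1}\},
\]
viewing the right-hand side as the vertex set of $\partial\Diamond_{d+1}$. Since $\ell(-v)=-\ell(v)$, the map $\varphi$ intertwines the central symmetry $\iota$ of $\Sigma$ with the antipodal map on $\partial\Diamond_{d+1}$. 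To see that $\varphi$ is simplicial, let $\sigma\in\Sigma$ and suppose for contradiction that $\varphi(\sigma)$ contains both $e_j$ and $-e_j$ for some $j$: then $\sigma$ contains vertices $u,v$ with $\ell(u)=+j$ and $\ell(v)=-j$, and $\{u,v\}$ is an edge of $\Sigma$ with $\ell(\{u,v\})=\{-j,+j\}$, contradicting our assumption. Hence $\varphi(\sigma)$ is a face of $\partial\Diamond_{d+1}$ and $\varphi\colon\Sigma\to\partial\Diamond_{d+1}$ is a simplicial map.

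By convex interpolation, $\varphi$ induces a continuous map $|\Sigma|\to|\partial\Diamond_{d+1}|$, which under the central-symmetry homeomorphisms $|\Sigma|\cong S^d$ and $|\partial\Diamond_{d+1}|\cong S^d$ becomes an odd continuous map $S^d\to S^d$. Corollary~\ref{cor:surj} then gives surjectivity. In particular, the interior of the facet $\tau^*=\conv\{s_1 e_1,\dots,s_{d+1}e_{d+1}\}$ of $\partial\Diamond_{d+1}$ is covered. Pick any interior point $p\in\tau^*$ with preimage $q\in|\Sigma|$, and let $\tau$ be the unique face of $\Sigma$ whose relative interior contains $q$. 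Then $\varphi(\tau)$ is a face of $\partial\Diamond_{d+1}$ whose relative interior meets that of $\tau^*$, so $\varphi(\tau)\supseteq\tau^*$; since $\tau^*$ is $d$-dimensional, $\tau$ must itself be $d$-dimensional, hence a facet, and its label set is exactly $\{s_1\cdot 1,\dots,s_{d+1}\cdot(d+1)\}$, establishing the second alternative.

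The only step that needs any care is verifying that $\varphi$ is simplicial, which is precisely where the hypothesis ``no edge labeled $\{-j,+j\}$'' enters and which handles the edges-in-faces obstruction; everything else reduces to a direct application of the already-established consequence of the Borsuk--Ulam theorem. I do not expect a substantial obstacle, since stronger forms of this dictionary between labellings and odd maps to the crosspolytope are classical (as the authors note, a version appears in De~Loera--Goaoc--Meunier--Mustafa).
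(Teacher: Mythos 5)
Your proof is correct and follows essentially the same route as the paper: build the odd simplicial map $\Sigma\to\partial\Diamond_{d+1}$ from the labelling using the no-antipodal-edge hypothesis, and then apply Corollary~\ref{cor:surj}. You merely spell out the final surjectivity-to-facet step in slightly more detail than the paper does.
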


\begin{proof}
    Assume that $\Sigma$ has no edge $e$ with $\ell(e) = \{-j,+j\}$ for $j \in [d+1]$. Then the map $\ell$ induces a simplicial map $L\colon \Sigma \to \partial\Diamond_{d+1}$ to the boundary of the crosspolytope~$\partial\Diamond_{d+1}$ by identifying label $j$ with standard basis vector $e_j$, and similarly identifying $-j$ with~$-e_j$. This map is odd, and thus $L$ is surjective by Corollary~\ref{cor:surj}. In particular, there is a facet $\sigma$ that $L$ maps to the facet $\{s_1\cdot 1, \dots, s_{d+1}\cdot (d+1)\}$ in~$\partial\Diamond_{d+1}$, which finishes the proof.
\end{proof}

By taking limits, we derive a version of the theorem above for set coverings of the sphere instead of labellings of triangulations. Fan already derived such a set covering variant in his original paper. We include a proof for completeness. 

\begin{thm}
\label{thm:set-kyfan}
    Let $A_1, \dots, A_{d+1} \subseteq S^d$ be closed sets such that $S^d = \bigcup_i A_i \cup \bigcup_i (-A_i)$. Fix signs $s_1, \dots, s_{d+1} \in \{-1,+1\}$. Either there is an $i\in [d+1]$ such that $A_i \cap (-A_i) \ne \emptyset$ or $\bigcap_i s_i A_i \ne \emptyset$.
\end{thm}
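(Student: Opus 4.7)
The plan is to reduce the set-covering statement to the labelling version Theorem~\ref{newkf} by discretizing and taking a limit, which is the standard passage from a combinatorial antipodal lemma to a continuous covering lemma.

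Assume $A_i \cap (-A_i) = \emptyset$ for every $i \in [d+1]$; we aim to show $\bigcap_i s_i A_i \ne \emptyset$. Since each $A_i$ is closed in the compact space $S^d$ and $A_i$ and $-A_i$ are disjoint, there is a uniform separation $\delta := \min_{i} \dist(A_i, -A_i) > 0$. Fix a sequence of centrally symmetric triangulations $\Sigma_n$ of $S^d$ with mesh $\epsilon_n \to 0$ (for instance, iterated barycentric subdivisions of $\partial\Diamond_{d+1}$, each of which inherits the antipodal involution).

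For each $n$ I define an antipodal labelling $\ell_n$ on the vertex set $V_n$ of $\Sigma_n$. Partition $V_n$ into antipodal pairs $\{v,-v\}$ (this is possible because the involution is free). For each pair, the covering hypothesis guarantees some $(i,j) \in [d+1]\times \{+1,-1\}$ with $v \in j\cdot A_i$; pick one such pair and set $\ell_n(v) := j\cdot i$ and $\ell_n(-v) := -j\cdot i$. By construction $\ell_n(-v) = -\ell_n(v)$, and crucially the value $\ell_n(v) = +i$ forces $v\in A_i$, while $\ell_n(v)=-i$ forces $v \in -A_i$.

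Once $\epsilon_n < \delta$, no edge of $\Sigma_n$ can carry labels $\{+i,-i\}$: such an edge $\{v,w\}$ would place $v \in A_i$ and $w \in -A_i$, forcing $\dist(v,w) \ge \delta > \epsilon_n$, a contradiction. Theorem~\ref{newkf} applied with the chosen signs $s_1,\dots,s_{d+1}$ therefore produces, for each large $n$, a facet $\sigma_n$ of $\Sigma_n$ whose vertices $v_1^n,\dots, v_{d+1}^n$ satisfy $\ell_n(v_i^n) = s_i\cdot i$, and hence $v_i^n \in s_i A_i$.

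To conclude, I would use compactness: passing to a subsequence, each sequence $(v_i^n)_n$ converges to some $x_i \in S^d$, and $x_i \in s_i A_i$ because $s_i A_i$ is closed. Since all $v_i^n$ lie in $\sigma_n$, their pairwise distances are at most $\epsilon_n \to 0$, so $x_1 = \cdots = x_{d+1} =: x \in \bigcap_i s_i A_i$, as desired. The only nonroutine step is arranging the antipodal labelling consistently with the covering; once the pair-by-pair choice is made, the separation $\delta > 0$ cleanly kills the alternative in Theorem~\ref{newkf}, and the rest is a standard limiting argument.
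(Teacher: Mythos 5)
Your argument is correct and follows the same route as the paper: discretize by a sequence of fine centrally symmetric triangulations, build an antipodal labelling compatible with the cover, invoke Theorem~\ref{newkf} (noting the separation $\delta>0$ rules out complementary edges), and pass to a limit using compactness and closedness of the $A_i$. The only cosmetic difference is that the paper tracks the barycenters of the selected facets while you track the $d+1$ vertices separately and then observe they collapse to a common limit; otherwise the two proofs are essentially identical.
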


\begin{proof}
    Assume that $A_i \cap (-A_i) = \emptyset$ for all~$i$. Let $\varepsilon>0$ be sufficiently small that the distance between $A_i$ and $-A_i$ is larger than~$\varepsilon$ for all~$i$. Let $T_\varepsilon$ be a centrally symmetric triangulation of~$S^d$ such that each facet has diameter less than~$\varepsilon$. This can be achieved by taking repeated barycentric subdivisions of a given centrally symmetric triangulation. 
    
    Let $\ell\colon V(T_{\varepsilon})\to \{\pm 1,\dots, \pm (d+1)\}$ be a labelling of the vertices of~$T_\varepsilon$ such that $\ell(v) = i$ only if $v \in A_i$, and such that $\ell(-v)=-\ell(v)$. Thus $\ell(v) = -i$ only if $v \in (-A_i)$. By our choice of~$\varepsilon$, the sum of labels of any edge is non-zero. By Theorem~\ref{newkf} there is a facet with labels $s_1\cdot 1, \dots, s_{d+1}\cdot (d+1)$. Let $x_\varepsilon$ be the barycenter of some such facet. As $\varepsilon$ approaches zero, by compactness of~$S^d$, the $x_\varepsilon$ have an accumulation point~$x$. Since the $A_i$ are closed, we have that $x\in \bigcap_i s_i A_i$.
\end{proof}

We will now derive a colorful set covering version of Fan's lemma by considering the barycentric subdivision of fine triangulations. We then label each vertex according to the $j$th set covering if the dimension of face it subdivides is~$j$. This idea is not new: It was originally used by Su~\cite{Su1999} to derive a colorful version of Sperner's lemma in order to establish results on rental harmony. Recently this idea was employed in~\cite{FrickZerbib2019} to prove colorful versions of set covering results for polytopes, such as a colorful KKMS and colorful Komiya's theorem. The new ingredient there was the application to set covering (instead of vertex labelling) results. The following is a colorful generalization of Fan's sphere covering result:

\begin{thm}\label{colkf}
    For $j \in [d+1]$ let $A_1^{(j)}, \dots, A_{d+1}^{(j)} \subseteq S^d$ be closed sets such that $S^d = \bigcup_i A_i^{(j)} \cup \bigcup_i (-A_i^{(j)})$ for each~$j$. Suppose $A_i^{(j)} \cap (-A_i^{(\ell)}) = \emptyset$ for all~$i$ and for all $j \ne \ell$. Fix signs $s_1, \dots, s_{d+1} \in \{-1,+1\}$. Then there is a permutation $\pi$ of $[d+1]$ such that $\bigcap_i s_{i} A_i^{(\pi(i))} \ne \emptyset$.
\end{thm}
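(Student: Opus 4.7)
The plan is to adapt the barycentric subdivision strategy discussed in the paragraph preceding the theorem (used by Su for colorful Sperner and by the authors in \cite{FrickZerbib2019}) to reduce Theorem \ref{colkf} to Theorem \ref{newkf}. The key observation is that an edge of the barycentric subdivision of any simplicial complex always joins the barycenter of a face $\sigma$ to the barycenter of a face $\tau$ with $\sigma \subsetneq \tau$; in particular, $\dim \sigma \neq \dim \tau$. This gives us exactly $d+1$ distinct ``slots'' in each facet of the subdivision, one per dimension $0,1,\dots,d$, which we can match bijectively with the $d+1$ colorings.

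First I would use the hypothesis to fix $\varepsilon>0$ smaller than $\dist(A_i^{(j)}, -A_i^{(\ell)})$ over all $i \in [d+1]$ and $j\neq \ell$ (positive by compactness and closedness). Choose a centrally symmetric triangulation $T_\varepsilon$ of $S^d$ all of whose facets have diameter less than~$\varepsilon$, obtained by iterated barycentric subdivision of~$\partial\Diamond_{d+1}$. Its barycentric subdivision $T'_\varepsilon$ is also centrally symmetric via the induced action on faces. For each vertex $v_\sigma$ of $T'_\varepsilon$, where $\sigma$ is a face of $T_\varepsilon$ of dimension $k-1$, pick the barycenter $b_\sigma$ of $\sigma$ in $S^d$ and define
\[
\ell(v_\sigma) = \begin{cases} +i & \text{if } b_\sigma \in A_i^{(k)}, \\ -i & \text{if } b_\sigma \in -A_i^{(k)},\end{cases}
\]
breaking ties arbitrarily (the covering property ensures some such~$i$ exists). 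Choosing the barycenters so that $b_{-\sigma} = -b_\sigma$ gives $\ell(-v_\sigma) = -\ell(v_\sigma)$.

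Next I would apply Theorem \ref{newkf} to $T'_\varepsilon$ with the labeling $\ell$ and the given signs $s_1,\dots,s_{d+1}$. The edge alternative is ruled out: any edge of $T'_\varepsilon$ connects $v_\sigma$ and $v_\tau$ with $\sigma\subsetneq\tau$, so the two labels come from the coverings indexed by $k_1 = \dim \sigma + 1$ and $k_2 = \dim \tau + 1$, which differ; an edge labeled $\{+i,-i\}$ would produce points in $A_i^{(k_1)}$ and $-A_i^{(k_2)}$ at distance less than~$\varepsilon$ (both lie in a facet of $T_\varepsilon$ of diameter less than~$\varepsilon$), contradicting our choice of~$\varepsilon$. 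Hence there is a facet of $T'_\varepsilon$, corresponding to a chain $\sigma_0 \subsetneq \sigma_1 \subsetneq \cdots \subsetneq \sigma_d$ with $\dim \sigma_j = j$, whose label set is $\{s_1\cdot 1, \dots, s_{d+1}\cdot(d+1)\}$. Because each label occurs once, this defines a permutation $\pi_\varepsilon$ of $[d+1]$ such that the vertex $v_{\sigma_{\pi_\varepsilon(i)-1}}$ carries label $s_i\cdot i$; equivalently, $b_{\sigma_{\pi_\varepsilon(i)-1}} \in s_i A_i^{(\pi_\varepsilon(i))}$.

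Finally, I would let $\varepsilon \to 0$. All barycenters $b_{\sigma_0},\dots,b_{\sigma_d}$ within a single such facet lie within distance~$\varepsilon$ of each other. Since there are only finitely many permutations of $[d+1]$, pass to a subsequence on which $\pi_\varepsilon$ is a fixed permutation $\pi$ and the chosen barycenters converge (by compactness of $S^d$) to a common limit point $x \in S^d$. Closedness of each $A_i^{(\pi(i))}$ then yields $x \in \bigcap_i s_i A_i^{(\pi(i))}$, proving the theorem. The main obstacle is purely bookkeeping, namely setting up the correspondence between label indices, dimensions, and coverings so that the conclusion of Theorem \ref{newkf} produces a permutation of the coverings rather than merely a set-cover intersection within a single coloring; the geometric input (closeness of antipodes on short edges) is handled uniformly by the single separation constant~$\varepsilon$.
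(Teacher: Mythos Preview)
Your proposal is correct and follows essentially the same argument as the paper's proof: take the barycentric subdivision of a fine centrally symmetric triangulation, label each vertex using the covering indexed by the dimension of the face it subdivides, rule out the edge alternative via the separation hypothesis, apply Theorem~\ref{newkf}, and pass to a limit along a subsequence with constant permutation. Your write-up is, if anything, slightly more explicit about the chain structure of facets in the barycentric subdivision and about why the two barycenters on a bad edge lie within~$\varepsilon$ of one another.
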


\begin{proof}
    As before, let $T_\varepsilon$ be a triangulation, where every face has diameter at most~$\varepsilon > 0$. Here $\varepsilon$ is chosen sufficiently small so that no face intersects both $A_i^{(j)}$ and~$-A_i^{(\ell)}$ for $j \ne \ell$ and any~$i$. Let $T'_\varepsilon$ denote the barycentric subdivision of~$T_\varepsilon$. Similar to the proof of Theorem~\ref{thm:set-kyfan}, let $\ell\colon V(T'_{\varepsilon})\to \{\pm 1,\dots, \pm (d+1)\}$ be a labelling of the vertices of~$T'_\varepsilon$ such that $\ell(v) = i$ only if $v \in A_i^{(k)}$ and $v$ subdivides a $(k-1)$-dimensional face of~$T_\varepsilon$. We may assume that $\ell(-v)=-\ell(v)$. By our choice of~$\varepsilon$, the sum of labels of any edge is non-zero. By Theorem~\ref{newkf} there is a facet with labels $s_1\cdot 1, \dots, s_{d+1}\cdot (d+1)$. Let $x_\varepsilon$ be the barycenter of some such facet. As $\varepsilon$ approaches zero, by compactness of~$S^d$, the $x_\varepsilon$ have an accumulation point~$x$. For each small $\varepsilon > 0$, let $\pi_\varepsilon$ be the permutation of~$[d+1]$ with $\pi_\varepsilon(i) = j$ if the (unique) vertex of the facet of $T'_\varepsilon$ that contains $x_\varepsilon$ and subdivides a face of dimension $j-1$ has label~$s_i\cdot i$. (If $x_\varepsilon$ is in multiple facets, we choose one arbitrarily.) Since there are only finitely many permutations of~$[d+1]$, we can choose a sequence of $\delta_1, \delta_2, \dots$ with $\delta_n$ converges to $0$ and all permutations $\pi_{\delta_n}$ are the same. Call this permutation~$\pi$. Since the $A_i^{(j)}$ are closed, we have that $x\in \bigcap_i s_i A_i^{(\pi(i))}$.
\end{proof}

We compare this colorful generalization of Fan's theorem to the result of Meunier and Su~\cite{MeunierSu2019}, who proved a multilabelled Fan's theorem. We will translate their result to a set covering result by taking limits to make a direct comparison with Theorem~\ref{colkf}.

\begin{thm}[Meunier and Su~\cite{MeunierSu2019}]
\label{thm:ms}
    Let $m \ge 1$ and $d\ge 1$ be integers, and let $d_1, \dots, d_m$ be non-negative integers with $d_1 + \dots + d_m = d$. For $j \in [m]$ let $A_1^{(j)}, \dots, A_{d+1}^{(j)} \subseteq S^d$ be closed sets such that $S^d = \bigcup_i A_i^{(j)} \cup \bigcup_i (-A_i^{(j)})$ for each~$j$. Suppose $A_i^{(j)} \cap (-A_i^{(j)}) = \emptyset$ for all~$i, j$. Then there are strictly increasing maps $f_j\colon [d_j+1] \to [d+1]$ and signs $s_j \in \{-1,+1\}$ for $j \in [m]$ such that $$\bigcap_{j=1}^m \bigcap_{i=1}^{d_j+1} s_j\cdot (-1)^i A_{f_j(i)}^{(j)} \ne \emptyset.$$
\end{thm}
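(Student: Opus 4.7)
The plan is to follow the template of Theorems~\ref{thm:set-kyfan} and~\ref{colkf}: discretize the covers on a fine centrally symmetric triangulation, apply a Fan-type combinatorial lemma, and extract a limit. The new ingredient is a \emph{multi-labeled} analogue of Theorem~\ref{newkf}. First, I would fix a centrally symmetric triangulation $T_\varepsilon$ of $S^d$ with facet diameter at most $\varepsilon$, small enough that no face of $T_\varepsilon$ meets both $A_i^{(j)}$ and $-A_i^{(j)}$ for any $i \in [d+1]$ and $j \in [m]$. For each $j$ I would construct an antipodal labelling $\ell_j \colon V(T_\varepsilon) \to \{\pm 1, \dots, \pm(d+1)\}$ with $\ell_j(v) = \pm i$ only when $v \in \pm A_i^{(j)}$ respectively, so that by the choice of $\varepsilon$ no edge of $T_\varepsilon$ receives opposite $\ell_j$-labels for any $j$.

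The combinatorial heart of the proof is a multi-label Ky Fan lemma of the following shape: for any $m$ antipodal labellings $\ell_1,\dots,\ell_m$ of a centrally symmetric triangulation of $S^d$, none antipodal on an edge, and any non-negative integers $d_j$ with $d_1+\dots+d_m=d$, there are a facet $\sigma$, signs $s_j \in \{\pm 1\}$, and strictly increasing maps $f_j\colon [d_j+1]\to [d+1]$ such that for each $j$ the labelling $\ell_j$ realizes the alternating chain $i\mapsto s_j(-1)^if_j(i)$ on some $d_j+1$ vertices of $\sigma$. I would try two parallel routes. The topological route bundles the $\ell_j$ into a single antipodal simplicial map $L=L_1\times\cdots\times L_m\colon T_\varepsilon \to (\partial\Diamond_{d+1})^m$ with the diagonal $\Z/2$-action, and uses an equivariant degree argument in the spirit of Corollary~\ref{cor:surj} to force the image to meet a specific $\Z/2$-invariant ``chain subcomplex'' whose total codimension equals $d$; the hypothesis $\sum d_j = d$ is precisely what makes the codimension count work. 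The inductive route reduces to smaller $m$: extract a single-label Fan chain from $\ell_1$ via Theorem~\ref{newkf}, then argue that an antipodal quotient of the faces realizing this chain carries the structure of a centrally symmetric $(d-d_1)$-sphere on which $\ell_2,\dots,\ell_m$ remain well-behaved, and recurse.

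Once the combinatorial lemma is in hand, the proof concludes by the compactness template of Theorem~\ref{colkf}: for each $\varepsilon$ let $x_\varepsilon$ be the barycenter of a witness facet in $T_\varepsilon$; since the menu of sign tuples $s_j$ and injections $f_j$ is finite, pass to a subsequence $\varepsilon_n\to 0$ along which these combinatorial data stabilize; closedness of the $A_i^{(j)}$ then places the accumulation point $x$ inside $\bigcap_{j,i} s_j(-1)^iA_{f_j(i)}^{(j)}$.

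The main obstacle I anticipate is the combinatorial lemma itself. The single-label proof of Theorem~\ref{newkf} is clean because the target crosspolytope has the same dimension as the source sphere, so surjectivity of an odd self-map is all that is needed. With $m$ labellings the target of the bundled map has larger dimension, and surjectivity alone is insufficient; one must pin down which codimension-$d$ $\Z/2$-invariant subcomplex is forced to be hit and then verify nonvanishing via an equivariant cohomology class, or alternatively organize the induction so that each step drops the ambient sphere dimension by exactly $d_j$ while preserving central symmetry and the non-antipodal-edge condition for the surviving labellings. Identifying the correct chain subcomplex, or equivalently verifying that the inductive antipodal quotient is genuinely a centrally symmetric $(d-d_1)$-sphere rather than a more singular object, is where I expect the bulk of the technical work to go.
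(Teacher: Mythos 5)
The paper does not prove Theorem~\ref{thm:ms}: it is quoted from Meunier and Su, and the surrounding text only remarks that their combinatorial (multilabeled Fan) lemma translates to the stated set-covering form by the usual discretize-and-take-limits argument. Your outer scaffolding --- fine centrally symmetric triangulation $T_\varepsilon$, a multilabeled combinatorial lemma, compactness and finiteness of the sign/injection data to extract a limit point --- matches that implicit derivation and is correct as far as it goes.

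The genuine gap is the multilabeled combinatorial lemma, and neither route you sketch closes it. The topological route breaks precisely where you expect: the bundled map $L\colon T_\varepsilon\to(\partial\Diamond_{d+1})^m$ lands in a complex of dimension $md>d$ for $m\ge 2$, so the surjectivity-of-odd-maps argument that powers Theorem~\ref{newkf} is simply unavailable, and the ``codimension count'' is heuristic, not a proof. The faces of $(\partial\Diamond_{d+1})^m$ realizing, for each $j$, an alternating chain of length $d_j+1$ do not assemble into a single $\Z/2$-invariant subcomplex of codimension $d$ that the image of an odd map is forced to meet; one would have to exhibit a concrete obstruction (a degree on a subquotient, a nonvanishing equivariant class, an index computation), and it is not at all clear which one works or why. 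The inductive route is also unrealized: Theorem~\ref{newkf} yields a single facet carrying a full length-$(d+1)$ alternating chain, not a length-$(d_1+1)$ chain together with a residual centrally symmetric $(d-d_1)$-sphere on which $\ell_2,\dots,\ell_m$ remain admissible. Making the ``antipodal quotient'' precise, showing it is again a centrally symmetric sphere of the right dimension, and verifying that the non-antipodal-edge condition survives the quotient is the entire content of the lemma, and you have not done it --- you correctly flag this as where the work lies. For what it is worth, Meunier and Su's own proof is a discrete path-following/counting argument in the style of Tucker's and Fan's lemmas rather than a topological degree or cohomology argument, so if you want to complete this the combinatorial side is the more promising place to dig.
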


Think of the sets $A_i^{(j)}$ recorded in a matrix, with set $A_i^{(j)}$ in the $j$th row and $i$th column. Theorem~\ref{thm:ms} has no assumptions regarding intersections between sets in different rows (only on sets in the same row) and the conclusion gives an intersection among sets (with alternating signs) that form a row-transversal. Theorem~\ref{colkf} has no assumption regarding intersections of sets in the same row (only distinct rows) and the conclusion gives an intersection among sets (with prescribed signs) that form a row and column transversal.

\begin{proof}[Proof of Theorem~\ref{thm:colorfulBU}]
    Define the following sets:
    \[A_i^{(j)}=\set{ x\in S^d \ | \ f(x)_{ji}=|f(x)_{ji}| \ge |f(x)_{j \alpha }| \ \text{for all} \ \alpha \in [d+1]},\]
    \[-A_i^{(j)}=\set{ x\in S^d \ | \ -f(x)_{ji}=|f(x)_{ji}| \ge |f(x)_{j \alpha }| \ \text{for all} \ \alpha \in [d+1]}.\]

    Note that for each~$j$, the collection of sets $A_i^{(j)}$ will satisfy $S^d=\bigcup_i A_i^{(j)} \cup \bigcup_i (-A_i^{(j)})$ since for every $x\in S^d$ the maximal entry in absolute value in the $j$th row of $f(x)$ must be achieved somewhere. If $A_i^{(j)} \cap (-A_i^{(\ell)}) \neq \emptyset$ for some~$i$ and for some $j \ne \ell$ then $f(x)_{ji}=|f(x)_{ji}| \ge |f(x)_{j \alpha }|$ and $-f(x)_{\ell i}=|f(x)_{\ell i}| \ge |f(x)_{\ell\alpha }|$ for all $\alpha \in [d+1]$. Thus $f(x)$ does not have rows in intersecting cube facets.

    If  $A_i^{(j)} \cap (-A_i^{(\ell)}) = \emptyset$ for all~$i$ and for all $j \ne \ell$ apply Theorem~\ref{colkf} with all signs $s_i = +1$, to get that there is a permutation $\pi$ of $[d+1]$ such that $\bigcap_i A_i^{(\pi(i))} \ne \emptyset$. Then for $x \in \bigcap_i A_i^{(\pi(i))}$ we have that $f(x)_{\pi(i)i} = |f(x)_{\pi(i)i}| \ge |f(x)_{\pi(i)j}|$ for all~$i, j \in [d+1]$.
\end{proof}

\section{Radon--KKM alternative and the colorful KKM theorem}

Recall that $\Delta_d = \{x \in \R^{d+1} \ : \ \sum_i x_i = 1, \ x_i \ge 0 \ \forall i \in [d+1]\}$ denotes the \emph{$d$-simplex}. For a subset $J \subseteq [d+1]$ the set $\Delta_d^J = \{x \in \Delta_d \ : \ x_j = 0 \ \forall j \notin J\}$ is a \emph{face} of~$\Delta_d$. If $J = [d+1]\setminus\{i\}$, then we call $\Delta_d^J$ the \emph{$i$th facet} of~$\Delta_d$. For $f\colon \Delta_d \to \R^n$, a partition $J \sqcup J'$ of $[d+1]$ with $f(\Delta_d^J) \cap f(\Delta_d^{J'}) \ne \emptyset$ is a \emph{Radon partition} for~$f$. The following is a ``discretized'' variant of the Borsuk--Ulam theorem:

\begin{thm}[Topological Radon theorem -- Bajmoczy and B\'ar\'any~\cite{Bajmoczy1979}]
    Let $f \colon \Delta_d \to \R^{d-1}$ be continuous. Then $f$ has a Radon partition.
\end{thm}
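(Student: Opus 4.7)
The plan is to use the standard deleted-join reduction to the classical Borsuk--Ulam theorem. The paper has already recorded that the deleted join $(\Delta_d)^{*2}_\Delta = \partial\Diamond_{d+1}$ has geometric realization homeomorphic to $S^d$, with a free $\Z/2$-action that swaps the two copies of~$\Delta_d$; concretely, the antipode of a point $\lambda x + (1-\lambda) y$ (with $x, y \in \Delta_d$ lying in vertex-disjoint faces) is $(1-\lambda) y + \lambda x$. The idea is to build an odd continuous map out of $f$ whose zero encodes a pair of points in vertex-disjoint faces with equal $f$-values.

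The key construction is $F\colon (\Delta_d)^{*2}_\Delta \to \R^d$ defined by
$$F(\lambda x + (1-\lambda) y) = \bigl(\lambda f(x) - (1-\lambda) f(y),\ 2\lambda - 1\bigr).$$
A direct computation shows that swapping the roles of $x$ and $y$ and replacing $\lambda$ by $1-\lambda$ negates both blocks of coordinates, so $F$ is odd with respect to the antipodal action. Applying Theorem~\ref{thm:BU}(a) to $F$, viewed as an odd map $S^d \to \R^d$, yields a zero. The final coordinate then forces $\lambda = 1/2$, and the first $d-1$ coordinates give $f(x) = f(y)$ with $x$ and $y$ supported on disjoint subsets $J_1, J_2 \subseteq [d+1]$. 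Extending arbitrarily to a partition $[d+1] = J \sqcup J'$ with $J_1 \subseteq J$ and $J_2 \subseteq J'$ produces $f(x) \in f(\Delta_d^J) \cap f(\Delta_d^{J'})$, which is the desired Radon partition.

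There is no serious obstacle once one invokes the deleted-join setup; the proof is a textbook Borsuk--Ulam argument. The only points requiring minor care are checking that $F$ is well-defined and continuous along the subcomplexes of the join where $\lambda \in \{0,1\}$ (so that one of the two "components" $x$ or $y$ is not determined) --- here the prefactors $\lambda$ and $1-\lambda$ make any ambiguous contribution vanish --- and verifying oddness, both of which are immediate from the formula.
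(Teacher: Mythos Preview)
Your proof is correct and uses essentially the same deleted-join-plus-Borsuk--Ulam technique as the paper. The paper does not prove the Topological Radon theorem directly but instead derives it from the Radon--KKM alternative (Theorem~\ref{thm:RadonKKM}), whose proof builds the analogous odd map $\lambda\alpha(x)-(1-\lambda)\alpha(y)$ on $(\Delta_d)^{*2}_\Delta$ and applies the diagonal form of Borsuk--Ulam (Theorem~\ref{thm:BU}(c)); your appended coordinate $2\lambda-1$ is simply a different bookkeeping device for the same constraint.
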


It is no loss of generality to state the topological Radon theorem for maps $f\colon \Delta_d \to \Delta_{d-1}$ or $f\colon \Delta_d \to \partial\Delta_d \cong S^{d-1}$, where $\partial\Delta_d$ denotes the boundary of~$\Delta_d$. The topological Radon theorem is derived from the Borsuk--Ulam theorem, and in fact can be seen as a discretized version of it. Before exploring the colorful extension implied by our main result, we first investigate the non-colorful version, which will apply to maps $f\colon \Delta_d \to \Delta_d$. We will show that the topological Radon theorem, in a sense, is dual to the KKM theorem:

\begin{thm}[KKM theorem~\cite{KnasterKuratowskiMazurkiewicz1929}]
    Let $A_1, \dots, A_{d+1}$ be an open cover of~$\Delta_d$ such that for every $J \subseteq [d+1]$ we have that $\Delta_d^J \subseteq \bigcup_{j \in J} A_j$. Then $\bigcap A_i \ne \emptyset$.
\end{thm}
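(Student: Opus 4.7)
The plan is to prove the closed-sets analogue first and then reduce the stated open version to it via a partition of unity. Both steps rely only on Brouwer's fixed point theorem, which the introduction notes follows from the Borsuk--Ulam theorem (Theorem~\ref{thm:BU}).

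\emph{Step 1 (Closed case).} Assume each $A_i$ is closed; I will argue by contradiction, supposing $\bigcap_i A_i = \emptyset$. Define
\[
f \colon \Delta_d \to \Delta_d, \qquad f(x) = \frac{(\dist(x, A_1), \dots, \dist(x, A_{d+1}))}{\sum_j \dist(x, A_j)},
\]
which is continuous and well-defined since the denominator vanishes only on $\bigcap_i A_i = \emptyset$. By Brouwer, $f$ has a fixed point $x^\ast$, at which each coordinate $x^\ast_i$ is proportional to $\dist(x^\ast, A_i)$; hence $x^\ast_i > 0$ iff $x^\ast \notin A_i$. Setting $J := \{i : x^\ast_i > 0\}$ gives $x^\ast \in \Delta_d^J$, so the KKM hypothesis forces $x^\ast \in A_j$ for some $j \in J$, contradicting the definition of~$J$.

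\emph{Step 2 (Open to closed).} For the stated open cover, form the partition of unity $\phi_i(x) = \dist(x, A_i^c) / \sum_j \dist(x, A_j^c)$, which is well-defined by the covering hypothesis and satisfies $\phi_i(x) > 0 \Leftrightarrow x \in A_i$. For each $J \subseteq [d+1]$, the KKM face condition gives $\max_{j \in J} \phi_j(x) > 0$ pointwise on $\Delta_d^J$, and compactness of $\Delta_d^J$ upgrades this to a uniform lower bound $m_J > 0$. Setting $\epsilon := \min_J m_J > 0$ and $C_i := \{x : \phi_i(x) \ge \epsilon\}$, the sets $C_i \subseteq A_i$ are closed, cover $\Delta_d$ (by the definition of $m_{[d+1]}$), and satisfy the KKM condition (by the definition of $m_J$ for each $J$). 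Step~1 applied to $\{C_i\}$ then produces a point in $\bigcap_i C_i \subseteq \bigcap_i A_i$.

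\emph{Main obstacle.} The subtle point is finding a single $\epsilon > 0$ in Step~2 that simultaneously preserves the covering property and the KKM face condition for every $J$; this is where compactness of each face $\Delta_d^J$ combined with the KKM hypothesis is essential. A more sphere-theoretic route would lift to $\partial\Diamond_{d+1}$ by $\psi(x) = (|x_1|, \dots, |x_{d+1}|)$ and apply Fan's sphere covering theorem (Theorem~\ref{thm:set-kyfan}) to the closed sets $B_i = \psi^{-1}(A_i) \cap \{x : x_i \ge 0\}$ with signs $s_i = +1$; but guaranteeing $B_i \cap (-B_i) = \emptyset$ still requires a preliminary shrinking of each $A_i$ away from the facet $\Delta_d^{[d+1]\setminus\{i\}}$, which rests on the same compactness-plus-KKM input.
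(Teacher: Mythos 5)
Your proof is correct. The paper does not itself prove the KKM theorem---it is stated as a classical result with a citation---so there is no paper proof to compare step by step. The paper does remark, after Theorem~\ref{thm:RadonKKM}, that that Radon--KKM alternative ``easily implies'' KKM, and that route ultimately rests on the Borsuk--Ulam theorem via the deleted-join map in the proof of Theorem~\ref{thm:RadonKKM}; your route instead invokes Brouwer's fixed point theorem directly. Both are sound; yours is the more elementary and classical derivation. One point worth flagging: the paper's suggested derivation is not as immediate as advertised, because the KKM condition on an open cover does not by itself force a subordinate partition of unity to avoid Radon partitions (take $d=1$, $A_1 = A_2 = \Delta_1$, $\alpha \equiv (\tfrac12,\tfrac12)$, which has a Radon partition), so making that implication rigorous requires exactly the kind of approximation your Step~2 performs. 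Your Step~1 is the standard Brouwer argument via $x\mapsto \bigl(\dist(x,A_i)\bigr)_i / \sum_j\dist(x,A_j)$, and the contradiction at a fixed point is correct. Your Step~2 handles the open-to-closed reduction carefully: the single $\varepsilon>0$ exists because you minimize over finitely many nonempty faces $\Delta_d^J$, each compact, and the thresholded sets $C_i\subseteq A_i$ inherit both the covering and KKM properties by construction. Your sketched sphere-theoretic alternative, folding $\partial\Diamond_{d+1}$ onto $\Delta_d$ by $\psi(x)=(|x_1|,\dots,|x_{d+1}|)$ and applying Theorem~\ref{thm:set-kyfan} to $B_i = \psi^{-1}(A_i)\cap\{x_i\ge 0\}$, is also correct and is the derivation most in the spirit of the paper's Fan-theorem machinery, but as you note it still needs the preliminary shrinking to ensure $B_i\cap(-B_i)=\emptyset$; that antipodality obstruction is precisely what your Step~2 removes.
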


We say that a finite sequence of continuous maps $\alpha_1, \dots, \alpha_n \colon \Delta_d \to [0,1]$ is a \emph{partition of unity} if $\sum_i \alpha_i(x) = 1$ for all $x \in \Delta_d$. Note that this means that $\alpha = (\alpha_1, \dots, \alpha_n)$ is a map to the $(n-1)$-simplex~$\Delta_{n-1}$. A partition of unity $\alpha_1, \dots, \alpha_n \colon \Delta_d \to [0,1]$ is \emph{subordinate} to an open cover $A_1, \dots, A_n$ of~$\Delta_d$ if $\alpha_i(x) > 0$ implies $x \in A_i$. Any open cover $A_1, \dots, A_n$ of $\Delta_d$ has a partition of unity subordinate to it, since the simplex is locally compact and Hausdorff. Conversely, any continuous $\alpha \colon \Delta_d \to \Delta_{n-1}$ gives an open cover $A_i = \{x\in \Delta_d \ : \ \alpha_i(x) > 0\}$ of~$\Delta_d$. Having a Radon partition is a degeneracy of a map $\alpha \colon \Delta_d \to \Delta_d$, while for a cover $A_1, \dots, A_{d+1}$ having empty intersection, $\bigcap A_i = \emptyset$, is a degeneracy. Theorem~\ref{thm:RadonKKM} shows that these degeneracies are dual to one another.

\begin{thm}
\label{thm:RadonKKM}
     Let $A_1, \dots, A_{d+1}$ be an open cover of~$\Delta_d$ and let $\alpha_1, \dots, \alpha_{d+1} \colon \Delta_d \to [0,1]$ be a partition of unity subordinate to the cover $\{A_1,\dots,A_{d+1}\}$. Let $\alpha = (\alpha_1,\dots, \alpha_{d+1}) \colon \Delta_d \to \Delta_d$. Then $\alpha$ has a Radon partition or $\bigcap A_i \ne \emptyset$.
\end{thm}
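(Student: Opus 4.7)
The plan is to reduce to the topological Radon theorem by showing that the two alternatives are genuinely complementary: if the intersection $\bigcap A_i$ is empty, then $\alpha$ factors through the boundary $\partial\Delta_d \cong S^{d-1}$, and topological Radon then supplies the Radon partition automatically. The entire content of the proof is the dictionary between these two ``degeneracies''; there is no real obstacle.

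The key observation is that subordination translates the emptiness of $\bigcap A_i$ directly into a codomain restriction on $\alpha$. Because $\alpha_i(x) > 0$ forces $x \in A_i$, we have $x \notin A_i \Rightarrow \alpha_i(x) = 0$. So if $\bigcap A_i = \emptyset$, every $x \in \Delta_d$ lies outside some $A_i$ and hence has some vanishing coordinate under $\alpha$; that is, $\alpha$ is a continuous map $\Delta_d \to \partial\Delta_d$. Applying the topological Radon theorem in the reformulation for maps to $\partial\Delta_d \cong S^{d-1}$ noted in the excerpt immediately after its statement yields a partition $[d+1] = J \sqcup J'$ with $\alpha(\Delta_d^J) \cap \alpha(\Delta_d^{J'}) \neq \emptyset$, which is exactly a Radon partition for $\alpha \colon \Delta_d \to \Delta_d$ (as $\partial\Delta_d \subseteq \Delta_d$).

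For self-containedness one can derive the needed Radon partition directly from Borsuk--Ulam. Fix a homeomorphism $h \colon \partial\Delta_d \to S^{d-1} \subset \R^d$ and set $\beta = h \circ \alpha$, noting that a Radon partition for $\beta$ is the same combinatorial datum as one for $\alpha$. If no Radon partition exists, then the $\Z/2$-antipodal map
\[
\widetilde\beta \colon (\Delta_d)^{*2}_\Delta \longrightarrow \R^d, \qquad \widetilde\beta(\lambda x + (1-\lambda)y) = \lambda\beta(x) - (1-\lambda)\beta(y)
\]
avoids zero: any zero would force $\lambda = 1-\lambda$ upon taking norms (since $|\beta| \equiv 1$) and then $\beta(x) = \beta(y)$ with $x, y$ in vertex-disjoint faces, a Radon partition. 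Normalizing therefore produces an odd continuous map $S^d \cong (\Delta_d)^{*2}_\Delta \to S^{d-1}$, contradicting Borsuk--Ulam.
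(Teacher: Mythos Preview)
Your proof is correct. Both your argument and the paper's rest on the deleted-join construction and Borsuk--Ulam, but the organization differs. The paper defines $F(\lambda x + (1-\lambda)y) = \lambda\alpha(x) - (1-\lambda)\alpha(y)$ as an odd map $(\Delta_d)^{*2}_\Delta \to \R^{d+1}$ and applies Borsuk--Ulam in the form ``$F$ hits the diagonal~$D$''; the case $F = 0$ forces $\lambda = \tfrac12$ and $\alpha(x) = \alpha(y)$ on vertex-disjoint faces (the Radon partition), while $F \in D \setminus \{0\}$ forces all $\alpha_i(x) > 0$ (or all $\alpha_i(y) > 0$), and subordination then places $x$ (or~$y$) in~$\bigcap A_i$. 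You instead front-load the case split: assuming $\bigcap A_i = \emptyset$, subordination forces $\alpha$ to land in $\partial\Delta_d \cong S^{d-1}$, and the statement becomes exactly the topological Radon theorem for maps $\Delta_d \to S^{d-1}$, which you then recover from Borsuk--Ulam via the same deleted-join map. Your route makes the equivalence with topological Radon transparent (the paper only records the converse direction, deriving Radon from this theorem), while the paper's single application of the diagonal form of Borsuk--Ulam handles both alternatives at once without first restricting the codomain.
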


\begin{proof}
    Let $F\colon (\Delta_d)^{*2}_\Delta \to \R^{d+1}, \ \lambda x + (1-\lambda)y \mapsto \lambda\alpha(x) - (1-\lambda)\alpha(y)$. Since $(\Delta_d)^{*2}_\Delta$ is homeomorphic to~$S^d$, by the Borsuk--Ulam theorem (Theorem~\ref{thm:BU}(c)), $F$ must hit the diagonal $D = \{(z_1,\dots, z_{d+1}) \in \R^{d+1} \ : \ z_1 = z_2 =\dots = z_{d+1}\}$. If $F(\lambda x + (1-\lambda)y) = 0$ then $\lambda\alpha(x) = (1-\lambda)\alpha(y)$ and also $0 = \sum \lambda\alpha_i(x) - (1- \lambda)\alpha_i(y) = \lambda \sum \alpha_i(x) - (1-\lambda)\sum \alpha_i(y) = 2\lambda-1$, which implies $\lambda = \frac12$ and thus $\alpha(x) = \alpha(y)$. If $F(\lambda x + (1-\lambda)y) \in D \setminus \{0\}$, then $\alpha_i(x) > 0$ for all $i \in [d+1]$ or $\alpha_i(y) > 0$ for all $i \in [d+1]$ depending on whether the coordinates of $F(\lambda x + (1-\lambda)y)$ are positive or negative. Thus either $x \in \bigcap A_i$ or $y \in \bigcap A_i$.
\end{proof} 

Theorem~\ref{thm:RadonKKM} easily implies both the topological Radon theorem and the KKM theorem. To derive the topological Radon theorem as a consequence, note that for a map $\alpha\colon \Delta_d \to \Delta_{d-1} \subset \Delta_d$, the coordinate functions $\alpha_1, \dots, \alpha_{d+1}$ are subordinate to the open cover $A_i = \{x \in \Delta_d \ : \ \alpha_i(x) > 0\}$, where $A_{d+1} = \emptyset$. Thus $\bigcap A_i = \emptyset$ and $\alpha$ has a Radon partition by Theorem~\ref{thm:RadonKKM}. 

We can regard Theorem~\ref{thm:RadonKKM} as a natural strengthening of the KKM theorem, where the KKM condition (that $\Delta_d^J \subseteq \bigcup_{j \in J} A_j$ for all $J \subseteq [d+1]$) is replaced by the condition that a partition of unity subordinate to the cover avoids a Radon partition, which is a weaker requirement. We can now prove a colorful generalization:

\begin{thm}
\label{thm:colKKM}
    Let $\alpha^{(1)}, \dots, \alpha^{(d+1)} \colon \Delta_d \to \Delta_d$ be continuous maps such that for every $J \subseteq [d+1]$ and for every $i \in [d+1]$ we have that $\alpha^{(i)}(\Delta_d^J) \subseteq \Delta_d^J$. Then there is an $x \in \Delta_d$ and a permutation $\pi$ of $[d+1]$ such that $\alpha^{(i)}_{\pi(i)}(x) \ge \alpha^{(i)}_j(x)$ for all $j \in [d+1]$.
\end{thm}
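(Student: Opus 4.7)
I follow the template of the proof of Theorem~\ref{thm:RadonKKM}: start from an odd map on the deleted join $(\Delta_d)^{*2}_\Delta \cong S^d$ and apply a Borsuk--Ulam-type result. With $d+1$ colors in play I package them as the rows of a matrix-valued map and invoke the colorful Borsuk--Ulam theorem (Theorem~\ref{thm:colorfulBU}) in place of the classical one.

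Explicitly, define
\[
F \colon (\Delta_d)^{*2}_\Delta \to \R^{(d+1)\times(d+1)}, \qquad F(\lambda x + (1-\lambda) y)_i \;=\; \lambda\, \alpha^{(i)}(x) - (1-\lambda)\, \alpha^{(i)}(y),
\]
where $\lambda \in [0,1]$ and $x, y \in \Delta_d$ lie in vertex-disjoint faces (so their supports $J_x, J_y$ are disjoint). Swapping the two copies of $\Delta_d$ negates $F$, so $F$ is continuous and odd; since $(\Delta_d)^{*2}_\Delta = \partial\Diamond_{d+1}$ is a centrally symmetric triangulation of $S^d$, Theorem~\ref{thm:colorfulBU} produces a point $z$ landing in one of its two alternatives.

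Two consequences of the hypothesis $\alpha^{(i)}(\Delta_d^J) \subseteq \Delta_d^J$ drive the case analysis. First, since $\alpha^{(i)}(x)$ is supported on $J_x$ and $\alpha^{(i)}(y)$ on $J_y$, each row of $F(z)$ is non-negative at columns of $J_x$, non-positive at columns of $J_y$, and zero elsewhere. Second, no row of $F(z)$ can be zero: the coordinate sum of the $i$th row equals $2\lambda - 1$ (using $\alpha^{(i)}(x), \alpha^{(i)}(y) \in \Delta_d$), so a zero row would force $\lambda = 1/2$ and $\alpha^{(i)}(x) = \alpha^{(i)}(y)$; but the two sides have disjoint supports, so both would have to vanish, contradicting $\alpha^{(i)}(x) \in \Delta_d$.

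These facts rule out the first alternative of Theorem~\ref{thm:colorfulBU}: if distinct rows $i, i'$ attain their maximum absolute value at a common column $j$, this maximum is strictly positive by the nonzero-row property, so $F(z)_{ij}$ and $F(z)_{i'j}$ are both nonzero; opposite signs would place $j$ in both $J_x$ and $J_y$, which is impossible. So there is a permutation $\pi$ of $[d+1]$ with $F(z)_{\pi(i), i}$ non-negative and of maximum absolute value in row $\pi(i)$, hence strictly positive; the sign rule then forces $i \in J_x$ for every $i \in [d+1]$, so $J_x = [d+1]$ and $J_y = \emptyset$. This can only occur at $\lambda = 1$, i.e.\ $z = x \in \Delta_d$, and then $F(x)_i = \alpha^{(i)}(x)$. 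The second alternative now reads $\alpha^{(\pi(i))}_i(x) \ge \alpha^{(\pi(i))}_j(x)$ for all $i, j$, which is the theorem with the permutation $\sigma = \pi^{-1}$. The main subtlety I anticipate is the boundary behavior of the deleted join at $\lambda \in \{0,1\}$; the face-preservation hypothesis is exactly what keeps the support bookkeeping rigid enough to force $\lambda = 1$ in the second alternative.
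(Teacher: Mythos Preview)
Your proof is correct and follows essentially the same route as the paper's: the same odd map $F$ on the deleted join, the same column-sign observation from the face-preservation hypothesis to rule out the first alternative of Theorem~\ref{thm:colorfulBU}, and the same chain (nonzero rows $\Rightarrow$ strictly positive transversal entries $\Rightarrow$ $J_x=[d+1]$ $\Rightarrow$ $\lambda=1$) to finish. Your coordinate-sum argument for ``no row is zero'' is slightly more explicit than the paper's, but the structure is identical.
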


\begin{proof}
    Let $A \colon \Delta_d \to \R^{(d+1) \times (d+1)}, \ x \mapsto (\alpha^{(i)}_j(x))_{i,j}$. The condition $\alpha^{(i)}(\Delta_d^J) \subseteq \Delta_d^J$ implies that for $x \in \Delta_d^J$ the matrix $A(x)$ has non-zero entries only in the columns corresponding to $j \in J$. Let
    $$F\colon (\Delta_d)^{*2}_\Delta \to \R^{(d+1) \times (d+1)}, \ \lambda x + (1-\lambda)y \mapsto \lambda A(x) - (1-\lambda)A(y).$$
    We observe that no column of~${F(\lambda x + (1-\lambda)y)}$ has both positive and negative entries: Indeed, if $\lambda=1$ all entries of ${F(\lambda x + (1-\lambda)y)}$ are non-negative; if $\lambda= 0$ all entries of ${F(\lambda x + (1-\lambda)y)}$ are non-positive; and if $0 < \lambda < 1$ then $x$ and $y$ are in proper faces of~$\Delta_d$, which are disjoint by definition of deleted join, say $x \in \Delta_d^J$ and $y \in \Delta_d^{[d+1] \setminus J}$. Then since $\alpha^{(i)}(\Delta_d^J) \subseteq \Delta_d^J$, columns of ${F(\lambda x + (1-\lambda)y)}$ corresponding to $j \in J$ are non-negative and all other columns are non-positive. Since each column of $F(\lambda x + (1-\lambda)y)$ is either entirely non-negative or entirely non-positive, $F(\lambda x + (1-\lambda)y)$ has rows in intersecting cube facets. 
    
    By Theorem~\ref{thm:colorfulBU} there is a point $\lambda x + (1-\lambda)y \in (\Delta_d)^{*2}_\Delta$ and a permutation $\pi$ of $[d+1]$ such that $F(\lambda x + (1-\lambda)y)_{\pi(i)i} = \lambda\alpha^{\pi(i)}_i(x) - (1-\lambda)\alpha^{\pi(i)}_i(y)$ is non-negative and $|F(\lambda x + (1-\lambda)y)_{\pi(i)i}| \ge |F(\lambda x + (1-\lambda)y)_{\pi(i)j}|$ for all~$i, j \in [d+1]$. If some $F(\lambda x + (1-\lambda)y)_{\pi(i)i}$ were zero, then since these entries maximize their respective rows, the entire row would be zero, which is impossible. Thus all $F(\lambda x + (1-\lambda)y)_{\pi(i)i}$ are positive. In particular, since the sign of columns is constant, all columns are non-negative. Notice that it is also not possible for an entire column to be zero, since one entry in this column maximizes its row in absolute value and this row is not identically zero. By the above this can only be the case when $\lambda = 1$. Thus $F(\lambda x + (1-\lambda)y)_{\pi(i)i} = \alpha^{\pi(i)}_i(x) > 0$, and $|F(\lambda x + (1-\lambda)y)_{\pi(i)i}| \ge |F(\lambda x + (1-\lambda)y)_{\pi(i)j}|$ for all~$i, j \in [d+1]$ gives that $\alpha^{\pi(i)}_i(x) \ge \alpha^{\pi(i)}_j(x)$ for all~$i, j \in [d+1]$.
\end{proof}

We derive the colorful KKM theorem, originally due to Gale~\cite{Gale1984}, as an immediate consequence of Theorem~\ref{thm:colKKM}:

\begin{cor}[colorful KKM theorem]
\label{cor:colKKM}
    For each $i \in [d+1]$ let $A_1^{(i)}, \dots, A_{d+1}^{(i)}$ be open covers of~$\Delta_d$ such that for every $J \subseteq [d+1]$ and for every $i \in [d+1]$ we have that $\Delta_d^J \subseteq \bigcup_{j \in J} A_j^{(i)}$. Then there is a permutation $\pi$ of $[d+1]$ such that $\bigcap A^{(i)}_{\pi(i)} \ne \emptyset$.
\end{cor}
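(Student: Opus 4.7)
The plan is to derive Corollary~\ref{cor:colKKM} directly from Theorem~\ref{thm:colKKM} by constructing, for each $i \in [d+1]$, a continuous map $\alpha^{(i)} \colon \Delta_d \to \Delta_d$ whose coordinate functions form a partition of unity subordinate to the cover $A_1^{(i)}, \dots, A_{d+1}^{(i)}$ and that additionally satisfies the face-preserving hypothesis $\alpha^{(i)}(\Delta_d^J) \subseteq \Delta_d^J$ required by Theorem~\ref{thm:colKKM}. Once such maps are in hand, Theorem~\ref{thm:colKKM} supplies some $x \in \Delta_d$ and a permutation $\pi$ of $[d+1]$ with $\alpha^{(i)}_{\pi(i)}(x) \ge \alpha^{(i)}_j(x)$ for all $i,j \in [d+1]$. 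Since the coordinates of $\alpha^{(i)}(x)$ sum to~$1$, the maximum $\alpha^{(i)}_{\pi(i)}(x)$ is at least $1/(d+1)$, hence strictly positive, and subordination yields $x \in A_{\pi(i)}^{(i)}$ for every~$i$, so $x \in \bigcap_i A_{\pi(i)}^{(i)}$.

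To build $\alpha^{(i)}$, I would set
$$g_j^{(i)}(x) = \min\!\bigl(x_j,\, \dist(x,\, \Delta_d \setminus A_j^{(i)})\bigr)$$
and then normalize $\alpha^{(i)}_j(x) = g_j^{(i)}(x) \big/ \sum_k g_k^{(i)}(x)$. By construction $g_j^{(i)}$ is continuous and nonnegative, and (since $A_j^{(i)}$ is open) it is strictly positive precisely when $x_j > 0$ and $x \in A_j^{(i)}$. This makes face-preservation immediate (if $x_j = 0$ then $g_j^{(i)}(x) = 0$, so $\alpha^{(i)}_j(x) = 0$) and makes subordination immediate ($\alpha^{(i)}_j(x) > 0$ forces $x \in A_j^{(i)}$). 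The only point that requires the KKM hypothesis is showing the denominator never vanishes: given $x \in \Delta_d$ with support $J^* = \{j : x_j > 0\}$, we have $x \in \Delta_d^{J^*}$, and the KKM condition $\Delta_d^{J^*} \subseteq \bigcup_{j \in J^*} A_j^{(i)}$ furnishes some $j^* \in J^*$ with $x \in A_{j^*}^{(i)}$, whence $g_{j^*}^{(i)}(x) > 0$.

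The main obstacle is the face-preserving requirement: a generic partition of unity subordinate to an open cover will not satisfy $\alpha^{(i)}(\Delta_d^J) \subseteq \Delta_d^J$, so Theorem~\ref{thm:colKKM} cannot be invoked with off-the-shelf bump functions. The product trick above—cutting each barrier function $\dist(x, \Delta_d \setminus A_j^{(i)})$ against the coordinate $x_j$—is precisely what arranges the vanishing on the opposite facets, and the KKM hypothesis is used exactly where it is needed, namely to ensure the normalization is well-defined.
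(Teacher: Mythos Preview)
Your proof is correct and follows the same route as the paper: build face-preserving partitions of unity subordinate to the covers and invoke Theorem~\ref{thm:colKKM}. The paper is terser---it simply asserts that a subordinate partition of unity satisfies $\alpha^{(i)}(\Delta_d^J)\subseteq\Delta_d^J$ ``since $\Delta_d^J\subseteq\bigcup_{j\in J}A_j^{(i)}$''---whereas you correctly observe that a generic subordinate partition of unity need not be face-preserving and supply an explicit construction (cutting $\dist(x,\Delta_d\setminus A_j^{(i)})$ against $x_j$) that guarantees it.
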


\begin{proof}
    Let $\alpha^{(i)} \colon \Delta_d \to \Delta_d$ be a partition of unity subordinate to the cover $\{A_1^{(i)}, \dots, A_{d+1}^{(i)}\}$. Then for every $J \subseteq [d+1]$ and for every $i \in [d+1]$ we have that $\alpha^{(i)}(\Delta_d^J) \subseteq \Delta_d^J$, since $\Delta_d^J \subseteq \bigcup_{j \in J} A_j^{(i)}$. Thus by Theorem~\ref{thm:colKKM} there is an $x \in \Delta_d$ and a permutation $\pi$ of $[d+1]$ such that $\alpha^{(i)}_{\pi(i)}(x) \ge \alpha^{(i)}_j(x)$ for all $j \in [d+1]$. In particular, $\alpha^{(i)}_{\pi(i)}(x) > 0$ and thus $x \in \bigcap A^{(i)}_{\pi(i)}$.
\end{proof}

\begin{rem}
    In the same way that the Borsuk--Ulam theorem strengthens Brouwer's fixed point theorem, Theorem~\ref{thm:colKKM} and Corollary~\ref{cor:colKKM} exhibit the colorful Borsuk--Ulam theorem as a strengthening of Gale's colorful KKM theorem. Here it is interesting that in the proof of Theorem~\ref{thm:colKKM} we used that the columns of any matrix in the image of the map~$F$ are either non-negative or non-positive. This is a much stronger condition than is necessary for the application of Theorem~\ref{thm:colorfulBU}. 
\end{rem}

As a consequence of Corollary~\ref{cor:colKKM} we can state a colorful Brouwer's fixed point theorem that for $d+1$ maps $f_i \colon \Delta_d \to \Delta_d$ asserts the existence of a point $x\in \Delta_d$ and a set of inequalities that in the case $f_1 = f_2 = \dots = f_{d+1}$ specialize to $x$ is a fixed point. We introduce one piece of terminology: Let $S(d) \subseteq \R^{d \times d}$ be the set of \emph{stochastic matrices}, that is, $A \in S(d)$ if all entries of $A$ are non-negative and every row sums to one. Thus stochastic matrices are those $(d \times d)$-matrices, where every row vector belongs to the $(d-1)$-simplex~$\Delta_{d-1}$.

\begin{thm}
\label{thm:colBrouwer}
    Let $f\colon \Delta_d \to S(d)$ be continuous. Then there is an $x \in \Delta_d$ and a permutation $\pi$ of $[d+1]$ such that $f_{i\pi(i)}(x) \le x_{\pi(i)}$ for all $i\in[d+1]$.
\end{thm}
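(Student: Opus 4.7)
The plan is to derive Theorem~\ref{thm:colBrouwer} from the colorful KKM theorem (Corollary~\ref{cor:colKKM}), mirroring the classical deduction of Brouwer's fixed point theorem from KKM via the sets $A_j = \{x \in \Delta_d : g_j(x) \le x_j\}$ for a self-map $g$ of~$\Delta_d$. Since Corollary~\ref{cor:colKKM} is stated for open covers, for each small parameter $\varepsilon > 0$ I would work with the open relaxations
$$A^{(i)}_j(\varepsilon) = \{x \in \Delta_d : f_{ij}(x) < x_j + \varepsilon\}, \qquad i, j \in [d+1].$$

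First I would verify that, for each fixed $i$, the family $\{A^{(i)}_j(\varepsilon)\}_{j \in [d+1]}$ is an open cover of $\Delta_d$ satisfying the KKM condition. The cover property follows from $\sum_j f_{ij}(x) = 1 = \sum_j x_j$, since $f_{ij}(x) \ge x_j + \varepsilon$ for every $j$ would make the two sums disagree. The KKM condition $\Delta_d^J \subseteq \bigcup_{j \in J} A^{(i)}_j(\varepsilon)$ is checked the same way: for $x \in \Delta_d^J$ we have $\sum_{j \in J} x_j = 1 \ge \sum_{j \in J} f_{ij}(x)$, so some $j \in J$ must satisfy $f_{ij}(x) \le x_j < x_j + \varepsilon$. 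Applying Corollary~\ref{cor:colKKM} then produces a permutation $\pi_\varepsilon$ of $[d+1]$ and a point $x_\varepsilon \in \bigcap_i A^{(i)}_{\pi_\varepsilon(i)}(\varepsilon)$, that is, $f_{i, \pi_\varepsilon(i)}(x_\varepsilon) < (x_\varepsilon)_{\pi_\varepsilon(i)} + \varepsilon$ for every $i \in [d+1]$.

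To finish, I would let $\varepsilon \to 0$. Since there are only finitely many permutations of $[d+1]$, a pigeonhole argument gives a sequence $\varepsilon_n \to 0$ along which $\pi_{\varepsilon_n}$ is constantly equal to some fixed $\pi$; by compactness of $\Delta_d$, a further subsequence of $(x_{\varepsilon_n})$ converges to some $x \in \Delta_d$, and continuity of $f$ yields $f_{i, \pi(i)}(x) \le x_{\pi(i)}$ for every $i \in [d+1]$, as required. The argument is essentially routine once colorful KKM is available; the only real subtlety is the open-versus-closed mismatch with Corollary~\ref{cor:colKKM}, which is bridged by the $\varepsilon$-relaxation together with the pigeonhole-and-limit step. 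As a sanity check, specializing to $f_1 = \cdots = f_{d+1} = g$ forces $g_j(x) \le x_j$ for every $j$, and combining this with $\sum_j g_j(x) = \sum_j x_j$ forces equality, recovering the fact that $x$ is a fixed point of~$g$.
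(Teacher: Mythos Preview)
Your argument is correct and follows essentially the same route as the paper: define the KKM-type sets $A^{(i)}_j = \{x : f_{ij}(x) \le x_j\}$, verify the face-covering condition via the row-sum identity, and invoke the colorful KKM theorem. The only difference is cosmetic: the paper works directly with the closed sets $A^{(i)}_j$ and dismisses the open/closed mismatch with a one-line appeal to ``a standard approximation argument,'' whereas you make that argument explicit via the $\varepsilon$-relaxations $A^{(i)}_j(\varepsilon)$ and the pigeonhole-plus-compactness limit.
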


\begin{proof}
    Let $A_j^{(i)} = \{x \in \Delta_d \ : \ f_{ij}(x) \le x_j\}$. These sets are closed by continuity of~$f$. Let $J \subseteq [d+1]$ be some non-empty set. Then for $x \in \Delta_d^J$ we have that $\sum_{j \in J} x_j = 1$ and $\sum_{j \in J} f_{ij}(x) \le 1$ for every $i \in [d+1]$. This implies that for some $j \in J$ we have that $f_{ij}(x) \le x_j$ and thus $x \in A^{(i)}_j$. This shows that $\Delta_d^J \subseteq \bigcup_{j \in J} A^{(i)}_j$. A standard approximation argument shows that Corollary~\ref{cor:colKKM} also holds for collections of closed sets, and thus we get that there is a permutation $\pi$ of $[d+1]$ such that $\bigcap A^{(i)}_{\pi(i)} \ne \emptyset$. Any $x \in \bigcap A^{(i)}_{\pi(i)}$ satisfies the desired set of inequalities.
\end{proof}

This is indeed a colorful generalization of Brouwer's fixed point theorem. If every row of $f\colon \Delta_d \to S(d)$ is equal to $h \colon \Delta_d \to \Delta_d$, then Theorem~\ref{thm:colBrouwer} asserts the existence of $x\in \Delta_d$ with $h_i(x) \le x_i$ for all $i \in [d+1]$. Since $\sum_i h_i(x) = 1 = \sum_i x_i$, this implies $h(x) = x$.

\section{The colorful ham sandwich theorem}

We first recall the classical Ham Sandwich theorem, conjectured by Steinhaus and proved by Banach; see~\cite{beyer2004}. Here \emph{hyperplane} refers to an affine subspace of codimension one. We will think of every hyperplane $H \subseteq \R^d$ as coming with a fixed orientation so that its positive halfspace~$H^+$ is well-defined (similarly, its negative halfspace~$H^-$), that is, for $H = \{x \in \R^d \ : \ \langle x, z \rangle = b\}$ for $z \in \R^d \setminus \{0\}$ and $b \ge 0$, we let $H^+ = \{x \in \R^d \ : \ \langle x,z \rangle \ge b\}$ and $H^- = \{x \in \R^d 
\ : \ \langle x,z \rangle \le b\}$. 

\begin{thm}[Ham Sandwich theorem]
\label{thm:hs}
    Let $\mu_1, \mu_2,\dots \mu_{d}$ be Borel probability measures on~$\R^d$ such that for every hyperplane $H$ we have that $\mu_i(H) = 0$ for all $i \in [d]$. Then there is a hyperplane~$H$ such that $\mu_i(H^+)=\frac{1}{2} = \mu_i(H^-)$ for all $i\in[d]$.
\end{thm}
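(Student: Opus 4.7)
The plan is to derive the Ham Sandwich theorem directly from the Borsuk--Ulam theorem (Theorem~\ref{thm:BU}(a)) by parametrizing oriented affine hyperplanes in~$\R^d$ by points of~$S^d \subset \R^{d+1}$, in the standard way. For $z = (z_0, z_1, \dots, z_d) \in S^d$, set
\[
H_z^+ = \set{x \in \R^d \ : \ z_0 + z_1 x_1 + \cdots + z_d x_d \ge 0}, \qquad H_z^- = \set{x \in \R^d \ : \ z_0 + z_1 x_1 + \cdots + z_d x_d \le 0},
\]
and let $H_z = H_z^+ \cap H_z^-$. Whenever $(z_1,\dots,z_d) \ne 0$, this is a genuine oriented hyperplane; at the two exceptional antipodal points $z = (\pm 1, 0, \dots, 0)$, the set $H_z^+$ degenerates to $\R^d$ or $\emptyset$. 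Crucially, $H_{-z}^+ = H_z^-$.

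Next I would define $f \colon S^d \to \R^d$ by $f_i(z) = \mu_i(H_z^+) - \mu_i(H_z^-)$ for $i \in [d]$. This map is odd, since $f_i(-z) = \mu_i(H_z^-) - \mu_i(H_z^+) = -f_i(z)$. I would check continuity of~$f$: away from the degenerate points the symmetric difference $H_z^+ \triangle H_{z'}^+$ for $z'$ near~$z$ is contained in an arbitrarily thin slab about~$H_z$, which has $\mu_i$-measure tending to $\mu_i(H_z) = 0$ by the hypothesis; at the two degenerate points, as $z'$ approaches $z = (1,0,\dots,0)$ the halfspace $H_{z'}^+$ exhausts $\R^d$ from the inside (the bounding hyperplane moves off to infinity), so $\mu_i(H_{z'}^+) \to 1$, matching the value at the limit point.

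Then I would apply Borsuk--Ulam (Theorem~\ref{thm:BU}(a)) to obtain $z^\ast \in S^d$ with $f(z^\ast) = 0$, i.e.\ $\mu_i(H_{z^\ast}^+) = \mu_i(H_{z^\ast}^-)$ for all $i \in [d]$. At the degenerate points one has $|f_i(z)| = 1$, so $z^\ast$ must correspond to a genuine hyperplane. Combining $\mu_i(H_{z^\ast}^+) + \mu_i(H_{z^\ast}^-) = 1 + \mu_i(H_{z^\ast}) = 1$ with equality of the two halfspace measures yields $\mu_i(H_{z^\ast}^+) = \mu_i(H_{z^\ast}^-) = \tfrac{1}{2}$, as required.

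The only genuine technicality I expect is the continuity check at the two degenerate points $z = (\pm 1, 0, \dots, 0)$, where the parametrization breaks down as a smooth chart on the space of hyperplanes; one needs a short explicit estimate showing that as $(z_1,\dots,z_d) \to 0$ with $z_0 \to 1$, any large ball in $\R^d$ eventually lies inside $H_z^+$, so $\mu_i(H_z^+) \to 1 = \mu_i(\R^d)$ by monotone convergence. Everywhere else, continuity follows immediately from the hyperplane-null hypothesis on the $\mu_i$.
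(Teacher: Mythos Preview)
Your argument is correct and is the standard derivation of the Ham Sandwich theorem from Borsuk--Ulam. The paper does not actually supply its own proof of Theorem~\ref{thm:hs}; it is stated as a classical result and attributed to Steinhaus and Banach. That said, the same parametrization of oriented halfspaces by points of~$S^d$ and the same test function $u \mapsto \mu_i(H^+(u)) - \tfrac12\mu_i(\R^d)$ appear in the paper's proof of the colorful generalization (Theorem~\ref{colHS}), so your approach is fully aligned with the paper's methods. One minor remark: your ``thin slab'' description of $H_z^+ \triangle H_{z'}^+$ is only literally true on bounded regions (two nearby non-parallel hyperplanes diverge at infinity), but this is harmless since each $\mu_i$ is a probability measure and one can first restrict to a large ball carrying all but~$\varepsilon$ of the mass; alternatively, dominated convergence applied to the indicator $1_{H_z^+}$ gives continuity directly.
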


A colorful version of Theorem~\ref{thm:hs} will take several families of Borel probability measures as input and guarantee the existence of a hyperplane that separates the measures in a certain way. To be a true colorful version, such a result should specialize to Theorem~\ref{thm:hs} if all families of Borel measures are the same. We will discuss two natural attempts at formulating a colorful Ham Sandwich theorem, for simplicity for $d=2$.

Given two families of Borel probability measures $\sigma_1=\set{r_1,g_1},\sigma_2=\set{r_2,g_2}$ in~$\R^2$, is there a line~$H$ such that $r_1(H^+)\ge \frac{1}{2}, r_2(H^-)\ge \frac{1}{2}$ and $g_1(H^-)\ge \frac{1}{2}, g_2(H^+)\ge \frac{1}{2}$? For  $r_1 = r_2$ and $g_1 = g_2$ this would reduce to the usual Ham Sandwich theorem. However, this proposed colorful version is false: Figure~\ref{fig:badcolHS} shows two families of two Borel measures (red and green) distributed along the parabola such that no line $H$ as above exists.

\begin{figure}[ht]
\centering
\begin{tikzpicture}[xscale=0.3, yscale=0.3]
\draw[line width= 4pt, red, opacity=0.5, scale =0.15, domain = -5:0, variable = \x]  plot ({5*\x},{2*\x*\x+1});
 \draw[line width= 4pt, green, opacity=0.5, scale =0.15, domain = 0:5, variable = \x]  plot ({5*\x},{2*\x*\x+1});

\draw (-5,4)--(5,4);
\draw[->, gray] (-5,0)--(5,0);
\draw[->, gray] (0,-2)--(0,10);
\end{tikzpicture}
\hspace{0.1in}
\begin{tikzpicture}[xscale=1.7, yscale=1.7]

\fill[fill=red, fill opacity=0.5](0.1, 1.5)--(2, 1.5)--(2, 1.2)--(0.1,1.2)--cycle;
\fill[fill=green, fill opacity=0.5](2, 1.5)--(3.9, 1.5)--(3.9, 1.2)--(2,1.2)--cycle;
\fill[fill=red, fill opacity=0.5](0.9, 0.7)--(2.9, 0.7)--(2.9, 1)--(0.9,1)--cycle;
\fill[fill=green, fill opacity=0.5](0.1, 0.7)--(0.9, 0.7)--(0.9, 1)--(0.1,1)--cycle;
\fill[fill=green, fill opacity=0.5](2.9, 0.7)--(3.9, 0.7)--(3.9, 1)--(2.9,1)--cycle;

\draw (1,0.5) -- (1,2);
\draw (3,0.5) -- (3,2);
\draw[->] (1,1.8)--(0.1,1.8);
\draw[->] (3,1.8)--(2.1,1.8);

\node (A) at (0.5,1.35){$r_1$};
\node (B) at (2.5,1.35){$g_1$};
\node (C) at (1.5,0.85){$r_2$};
\node (D) at (0.5,0.85){$g_2$};

\end{tikzpicture}
\caption{We concentrate two families of measures along the parabola in the plane. Every line $H$ intersects the parabola in at most two points. If at least half the measure of $r_1$ is supposed to be between the intersection points of $H$ with the parabola, and at least half the measure of $g_1$ is outside the intersection points, then the intersections of $H$ with the parabola are constrained as indicated above. If then $r_2$ and $g_2$ are distributed along the parabola as indicated, either more than half of $r_2$ is between the intersection points, or more than half of $g_2$ is outside the intersection points.}
\label{fig:badcolHS}
\end{figure}
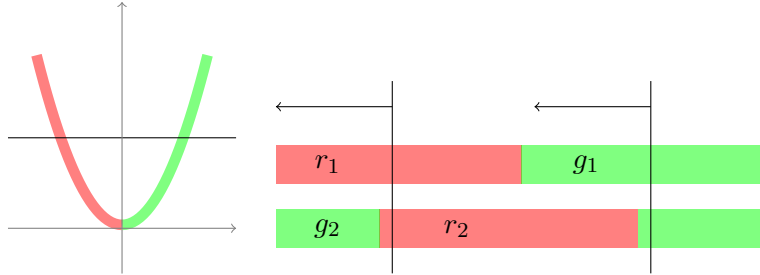
   
Since this proposed colorful generalization of the Ham Sandwich theorem has a conclusion that is too strong to be true, a refined attempt at arriving at a colorful version might allow us to switch the roles of $g_2$ and $r_2$ in the second family of Borel measures. That is, given two families of Borel probability measures $\sigma_1=\set{r_1,g_1}, \sigma_2=\set{r_2,g_2}$, is there a line~$H$ such that $r_1(H^+)\ge \frac{1}{2}, r_2(H^-)\ge \frac{1}{2}$ and either $g_1(H^+)\ge \frac{1}{2}, g_2(H^-)\ge \frac{1}{2}$ or $g_1(H^-)\ge \frac{1}{2}, g_2(H^+)\ge \frac{1}{2}$? Again, if $r_1 = r_2$ and $g_1 = g_2$ then this reduces to the usual Ham Sandwich theorem. While this colorful version of the Ham Sandwich theorem is true, it is a trivial consequence of the Ham Sandwich theorem itself: Let $H$ be a line that simultaneously bisects the measures $r_1$ and $g_1$. Then $r_1(H^+) = \frac12 = r_1(H^-)$. By flipping the orientation of $H$ if necessary, we can make sure that $r_2(H^-) \ge \frac12$. Moreover, $g_1(H^+) = \frac12 = g_1(H^-)$ and one of $g_2(H^+) \ge \frac12$ or $g_2(H^-) \ge \frac12$ has to hold as well.

Nevertheless, the Ham Sandwich theorem admits a (non-trivial) colorful generalization. The example above shows that we need to impose that measures in different families are not ``oppositely distributed.'' We make this notion precise now and then state the colorful ham sandwich theorem.

Let $\mathcal M = \{\mu_1, \dots, \mu_m\}$ be a family of finite Borel measures on~$\R^d$, and let $H$ be a hyperplane. We say that $\mu_i$ \emph{maximizes $H^+$ for $\mathcal M$} if $\mu_i(H^+) -\frac12\mu_i(\R^d) \ge \mu_\alpha(H^+) - \frac12\mu_\alpha(\R^d)$ for all $\alpha \in [m]$. Similarly, $\mu_i$ \emph{minimizes $H^+$ for $\mathcal M$} if $\mu_i(H^+)-\frac12\mu_i(\R^d) \le \mu_\alpha(H^+) - \frac12\mu_\alpha(\R^d)$ for all $\alpha \in [m]$. We may now state the colorful generalization of the Ham Sandwich theorem:

\begin{thm}
\label{colHS}
    For each $j \in [d+1]$ let $\mathcal M_j = \{\mu_1^{(j)}, \dots, \mu_{d+1}^{(j)}\}$ be an ordered family of $d+1$ finite Borel measures on~$\R^d$  such that for every hyperplane $H$ we have that $\mu_i^{(j)}(H) = 0$ for all $i \in [d+1]$. Suppose further that for $j, k \in [d+1]$ with $j \ne k$ and any hyperplane $H$ we have that if $\mu_i^{(j)}$ maximizes $H^+$ for $\mathcal M_j$ then $\mu_i^{(k)}$ does not minimize $H^+$ for~$\mathcal M_k$.
    Then there is permutation $\pi$ of $[d+1]$ such that $\mu_{i}^{(\pi(i))}(H^{+})-\frac12\mu_i^{(\pi(i))}(\R^d) \ge \mu_j^{(\pi(i))}(H^+) - \frac12\mu_j^{(\pi(i))}(\R^d)$ for all~$i,j \in [d+1]$. 
\end{thm}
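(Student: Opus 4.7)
The strategy is to apply Theorem~\ref{thm:colorfulBU} to an odd matrix-valued map on $S^d$ whose entries record how much each measure $\mu_i^{(j)}$ is pushed onto one side of an oriented hyperplane. Parametrize oriented hyperplanes (with degenerate cases allowed) by the sphere: for $z = (z_0, z_1, \ldots, z_d) \in S^d \subset \R^{d+1}$, set $H_z^+ = \{x \in \R^d : z_0 + z_1 x_1 + \cdots + z_d x_d \ge 0\}$, and $H_z$, $H_z^-$ analogously. Since $H_{-z}^+ = H_z^-$ and each $\mu_i^{(j)}$ vanishes on every hyperplane, the map
\[
f \colon S^d \to \R^{(d+1) \times (d+1)}, \qquad f(z)_{ji} = \mu_i^{(j)}(H_z^+) - \tfrac12 \mu_i^{(j)}(\R^d),
\]
is continuous and odd.

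Apply Theorem~\ref{thm:colorfulBU} to $f$ to obtain a point $z \in S^d$ falling into one of its two alternatives. I claim the hypothesis on the families $\mathcal M_j$ rules out the first alternative. Suppose $f(z)$ does not have rows in intersecting cube facets, so that there exist distinct row-indices $p \ne q$ in $[d+1]$ and a column $i \in [d+1]$ with $|f(z)_{p,i}| \ge |f(z)_{p,\alpha}|$ and $|f(z)_{q,i}| \ge |f(z)_{q,\alpha}|$ for every $\alpha$, yet $f(z)_{p,i} f(z)_{q,i} \le 0$. Without loss of generality $f(z)_{p,i} \ge 0 \ge f(z)_{q,i}$. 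Then $f(z)_{p,i} = |f(z)_{p,i}| \ge |f(z)_{p,\alpha}| \ge f(z)_{p,\alpha}$ for all $\alpha$, that is, $\mu_i^{(p)}$ maximizes $H_z^+$ for $\mathcal M_p$; symmetrically, $\mu_i^{(q)}$ minimizes $H_z^+$ for $\mathcal M_q$. Since $p \ne q$, this contradicts the assumption.

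Hence $z$ must satisfy the second alternative: there is a permutation $\pi$ of $[d+1]$ with $f(z)_{\pi(i), i} \ge 0$ and $|f(z)_{\pi(i), i}| \ge |f(z)_{\pi(i), j}|$ for all $i, j$. Combining these gives $f(z)_{\pi(i), i} = |f(z)_{\pi(i), i}| \ge |f(z)_{\pi(i), j}| \ge f(z)_{\pi(i), j}$, which unwinds to
\[
\mu_i^{(\pi(i))}(H^+) - \tfrac12 \mu_i^{(\pi(i))}(\R^d) \ge \mu_j^{(\pi(i))}(H^+) - \tfrac12 \mu_j^{(\pi(i))}(\R^d)
\]
for all $i, j \in [d+1]$, with $H = H_z$, as required.

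The main difficulty is conceptual rather than computational: one has to recognize that $\sign(f(z)_{j, i})$ encodes which side of $H_z$ carries more of the mass of $\mu_i^{(j)}$, and thus that the ``rows in intersecting cube facets'' condition of Theorem~\ref{thm:colorfulBU} is exactly the topological counterpart of the ``not oppositely distributed'' assumption. Once this dictionary is identified the proof is a direct reduction. A minor technical point is that the two antipodal poles $z = (\pm 1, 0, \ldots, 0)$ correspond to the degenerate choices $H_z^+ \in \{\emptyset, \R^d\}$; at such $z$ all entries of $f(z)$ share a sign, so the first alternative cannot fail there, and the argument goes through unchanged.
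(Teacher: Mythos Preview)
Your proof is correct and follows essentially the same route as the paper's: parametrize oriented halfspaces by~$S^d$, form the odd matrix-valued map $f(z)_{ji}=\mu_i^{(j)}(H_z^+)-\tfrac12\mu_i^{(j)}(\R^d)$, observe that the hypothesis on the families~$\mathcal M_j$ is precisely the statement that every $f(z)$ has rows in intersecting cube facets, and read off the conclusion from the second alternative of Theorem~\ref{thm:colorfulBU}. Your explicit remark about the poles $z=(\pm1,0,\dots,0)$ is a small addition over the paper's treatment, which handles them only implicitly.
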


\begin{proof}
Let $u = (u_0,u_1, \dots, u_d) \in S^d$. If there is an $i\in[d]$ such that $u_i\neq 0$, i.e., if $u_0 \ne \pm 1$, assign to $u$ the halfspace 
\[H^+(u) = \{(x_1,\dots, x_d)\in \R^d \ | \ u_1x_1+\dots u_dx_d\le u_0\}\]

Notice that
\[H^+(-u) = \{(x_1,\dots, x_d)\in \R^d \ | \ -u_1x_1 -\dots -u_dx_d\le -u_0\}\]
\[=\{(x_1,\dots, x_d)\in \R^d \ | \ u_1x_1+\dots +u_dx_d\ge u_0\}=H^-(u).\]

Define $f_{ji}(u)=\mu_i^{(j)}(H^+(u)) - \frac12\mu_i^{(j)}(\R^d)$. 
These maps are continuous and admit a continuous extension into the north and south pole. Thus the maps $f_{ij}$ define an odd and continuous map $F \colon S^d \to \R^{(d+1) \times (d+1)}$. If $F(u)$ does not have rows in intersecting cube facets then (writing $H = H(u)$) there are $i,j,k \in [d+1]$ with $j \ne k$ such that 
\begin{align*}
    &|\mu_i^{(j)}(H^+) - \tfrac12\mu_i^{(j)}(\R^d)| \ge |\mu_\alpha^{(j)}(H^+) - \tfrac12\mu_\alpha^{(j)}(\R^d)| \ \text{and} \\ 
    &|\mu_i^{(k)}(H^+) - \tfrac12\mu_i^{(k)}(\R^d)| \ge |\mu_\alpha^{(k)}(H^+) - \tfrac12\mu_\alpha^{(k)}(\R^d)| \ \text{for all} \ \alpha \in [d+1],
\end{align*}
where $\mu_i^{(j)}(H^+) - \frac12\mu_i^{(j)}(\R^d) \ge 0$ and $\mu_i^{(k)}(H^+) - \frac12\mu_i^{(k)}(\R^d) \le 0$. 
This means that $\mu_i^{(j)}$ maximizes $H^+$ for $\mathcal M_j$ and $\mu_i^{(k)}$ minimizes $H^+$ for~$\mathcal M_k$, in contradiction to our assumption. Thus $F(u)$ has rows in intersecting cube facets for every $u \in S^d$. Applying Theorem~\ref{thm:colorfulBU} finishes the proof.
\end{proof}

The uncolored version of Theorem~\ref{colHS}, that is, when $\mu_i^{(1)} = \mu_i^{(2)} = \dots = \mu_i^{(d+1)}$, is the following strengthening of the Ham Sandwich theorem:

\begin{thm}
\label{kfhs}
    Let $\mu_1, \mu_2,\dots \mu_{d+1}$ be finite Borel measures on~$\R^d$ such that for every hyperplane~$H$ we have that $\mu_i(H) = 0$ for all $i \in [d+1]$. Then there is a hyperplane $H$ such that $\mu_i(H^+)- \mu_i(H^-)=\mu_j(H^+) - \mu_j(H^-)$ for all $i,j\in[d+1]$.
\end{thm}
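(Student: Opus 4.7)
The plan is to derive Theorem~\ref{kfhs} as a direct specialization of Theorem~\ref{colHS}, taking all $d+1$ ordered families to be identical: for each $j\in[d+1]$ I set $\mathcal M_j=(\mu_1,\dots,\mu_{d+1})$, so that $\mu_i^{(j)}=\mu_i$ is independent of~$j$. The work will consist of checking the hypothesis of Theorem~\ref{colHS}, or else locating the desired hyperplane in the exceptional case where that hypothesis fails.

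In this symmetric setting the hypothesis reduces to the following statement: for every hyperplane~$H$, no single measure $\mu_i$ simultaneously achieves both the maximum and the minimum of $\mu_\alpha(H^+)-\tfrac12\mu_\alpha(\R^d)$ over $\alpha\in[d+1]$. I will split into two cases. If this fails for some hyperplane~$H$, then for that $H$ the maximum and the minimum of these quantities coincide, forcing all the values $\mu_\alpha(H^+)-\tfrac12\mu_\alpha(\R^d)$ to be equal; using $\mu_i(H)=0$, this is equivalent to $\mu_i(H^+)-\mu_i(H^-)$ being independent of $i$, so $H$ itself already witnesses the conclusion.

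Otherwise, the hypothesis holds for every hyperplane, and I will invoke Theorem~\ref{colHS} to obtain a hyperplane~$H$ and a permutation~$\pi$ of~$[d+1]$ satisfying $\mu_i(H^+)-\tfrac12\mu_i(\R^d)\ge\mu_j(H^+)-\tfrac12\mu_j(\R^d)$ for all $i,j\in[d+1]$. Here the permutation drops out of the inequality because the families coincide, so $\mu_i^{(\pi(i))}=\mu_i$ and $\mu_j^{(\pi(i))}=\mu_j$. Since this inequality holds for every ordered pair $(i,j)$, swapping $i$ and $j$ forces equality throughout, and multiplying by two produces $\mu_i(H^+)-\mu_i(H^-)=\mu_j(H^+)-\mu_j(H^-)$ for all $i,j$, as required.

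There is no genuinely hard step: the argument is an unpacking of Theorem~\ref{colHS}, and the only point demanding attention is the case split on whether Theorem~\ref{colHS}'s hypothesis is satisfied. Both branches deliver the conclusion immediately, so this subtlety is cosmetic rather than substantive.
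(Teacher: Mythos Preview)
Your proposal is correct and follows essentially the same approach as the paper's proof: both set all families equal, split on whether the hypothesis of Theorem~\ref{colHS} holds, and in each branch extract the desired equality (directly in the failure case, and via the permutation becoming irrelevant in the success case). Your write-up merely makes explicit the symmetry step (swapping $i$ and $j$) that the paper leaves implicit.
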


\begin{proof}
    Apply Theorem~\ref{colHS} in the case $\mathcal M_1 = \mathcal M_2 = \dots = \mathcal M_{d+1} = \{\mu_1, \dots, \mu_{d+1}\}$. First suppose that there exists some hyperplane $H$ such that the halfspace $H^+$ both maximizes and minimizes one of the~$\mu_i$. This implies $\mu_i(H^+) -\frac12\mu_i(\R^d) = \mu_\alpha(H^+) - \frac12\mu_\alpha(\R^d)$ for all $\alpha \in [d+1]$. Since $\mu_i(H^+) + \mu_i(H^-) = \mu_i(\R^d)$, this implies $\mu_i(H^+)- \mu_i(H^-)=\mu_\alpha(H^+) - \mu_\alpha(H^-)$ for all $\alpha \in[d+1]$. 
    
    If no $\mu_i$ simultaneously maximizes and minimizes some halfspace~$H^+$, then by Theorem~\ref{colHS}, we again get that $\mu_i(H^+) -\frac12\mu_i(\R^d) = \mu_\alpha(H^+) - \frac12\mu_\alpha(\R^d)$ for all $\alpha \in [d+1]$.
\end{proof}

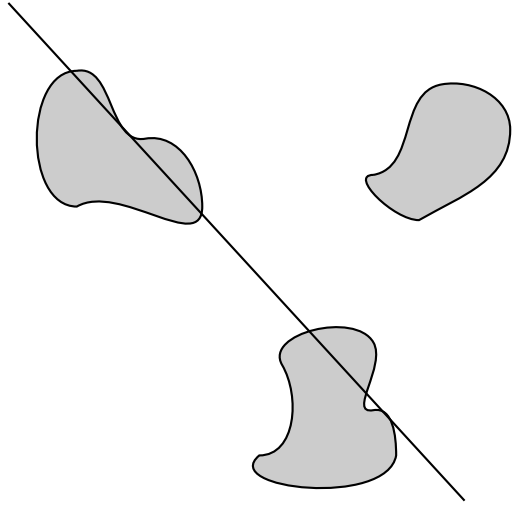
\begin{figure}
\centering
\begin{tikzpicture}[scale=0.6]

    \filldraw[thick, fill=black!20!white](6,-1.5)
    to [out=-60,in=190] (6,-3)
    to [out=10,in=90] (6.5,-4)
    to [out=-100,in=-140] (3.5,-4)
    to [out=0,in=-60] (4,-2)
    to [out=120, in=120] (6,-1.5);

    \filldraw[thick, fill=black!20!white](-0.5,4.5)
    to [out=10,in=190] (1,3)
    to [out=10,in=90] (2.25,1.5) 
    to [out=-90,in=30] (-0.5,1.5)
    to [out=180, in=180] (-0.5,4.5);

    \filldraw[thick, fill=black!20!white](6,2.2)
    to [out=10,in=190] (7.5,4.2)
    to [out=10,in=90] (9,3.2) 
    to [out=-90,in=30] (7,1.2)
    to [out=180, in=180] (6,2.2); 
    
    \draw[thick] (-2,6) -- (8,-5);
    
\end{tikzpicture}
\caption{An example of (supports of) three well-separated measures in the plane. If the supports are bounded then no line can intersect all three measures.}
\label{fig:sepmass}
\end{figure}

A family $\mathcal F$ of convex sets in $\R^d$ is \emph{well-separated} if any collection $x_1, \dots, x_k$ of points from pairwise distinct $K_1, \dots, K_k \in \mathcal F$ is in general position, that is, for any $k \le d+1$, any pairwise distinct $K_1, \dots, K_k \in \mathcal F$ and any $x_1 \in K_1, \dots, x_k \in K_k$ the set $\{x_1, \dots, x_k\}$ is not contained in a common $(k-2)$-dimensional affine subspace. See Figure~\ref{fig:sepmass} for an example. We call a family $\mathcal M$ of Borel measures on $\R^d$ \emph{well-separated} if the family of convex hulls of supports $\mathcal F = \{\conv(\mathrm{supp} \ \mu) \ : \ \mu \in \mathcal M\}$ is well-separated. The \emph{support} of a Borel measure $\mu$ on $\R^d$ is $\mathrm{supp} \ \mu = \{x \in \R^d \ : \ \forall \varepsilon > 0 \ \mu(B_\varepsilon(x)) > 0\}$. In particular, if $\mathcal F$ is a well-separated family of $d+1$ sets in~$\R^d$, then no hyperplane can intersect all sets in~$\mathcal F$. 

B\'{a}r\'{a}ny, Hubard, and Jer\'{o}nimo~\cite{BaranyHubardJeronimo2008} show that if $\mu_1, \dots, \mu_d$ are finite Borel measures on $\R^d$ with bounded supports that are well-separated and such that $\mu_i(H) = 0$ for every hyperplane $H$ in~$\R^d$, then there is a hyperplane that cuts off a specified fraction from each measure~$\mu_i$, that is, for $\alpha_1,\dots,\alpha_d\in (0,1)$ there is a hyperplane $H$ such that  $\mu_i(H^+)=\alpha_i\mu_i(\R^d)$ for all $i\in[d]$.
We use Theorem~\ref{kfhs} to show the following variant:

\begin{cor}[Variant of a result of B\'{a}r\'{a}ny, Hubard, Jer\'{o}nimo~\cite{BaranyHubardJeronimo2008}]
\label{cor:bhj}
Let $\mu_1, \mu_2,\dots \mu_d$ be finite Borel measures on~$\R^d$ with bounded supports and $\mu_i(H) = 0$ for all hyperplanes $H$ in $\R^d$ and for all $i \in [d]$. Suppose there is an $x \in \R^d$ such that no hyperplane through~$x$ may intersect the supports of all~$\mu_i$. Then for all $\alpha_1,\dots,\alpha_d\in (0,1)$ there is a hyperplane $H$ such that $\mu_i(H^+)=\alpha_i\mu_i(\R^d)$ for all $i\in[d]$.
\end{cor}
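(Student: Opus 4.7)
The plan is to deduce the corollary from Theorem~\ref{kfhs} by augmenting $\mu_1,\dots,\mu_d$ with a $(d+1)$st auxiliary measure concentrated near~$x$, together with carefully chosen perturbations of the $\mu_i$, so that the common-excess hyperplane produced by Theorem~\ref{kfhs} is forced to be a hyperplane with the desired $\alpha_i$-cuts. After translating so that $x=0$, fix a small $\varepsilon>0$ (to be specified below) and let $\nu$ be a Borel probability measure on $\R^d$ with continuous density supported in the open ball $B_\varepsilon(x)$; in particular $\nu(H)=0$ for every hyperplane~$H$.

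Choose a positive constant $C_0$ large enough that $c_i := C_0 - (2\alpha_i-1)\mu_i(\R^d) > 0$ for every $i\in[d]$, and define the $d+1$ finite Borel measures
\[
    \tilde\mu_i = \mu_i + c_i\,\nu \ \ (i=1,\dots,d), \qquad \tilde\mu_{d+1} = C_0\,\nu,
\]
each of which assigns zero mass to every hyperplane. Theorem~\ref{kfhs} then yields an oriented hyperplane $H$ for which $\tilde\mu_i(H^+) - \tilde\mu_i(H^-)$ is equal to a common value $C$ for all $i\in[d+1]$. Setting $s := \nu(H^+)-\nu(H^-)\in[-1,1]$, the equation for $i=d+1$ forces $C = C_0\,s$, and each of the remaining $d$ equations simplifies to
\[
    \mu_i(H^+) - \mu_i(H^-) \;=\; (C_0-c_i)\,s \;=\; (2\alpha_i-1)\,\mu_i(\R^d)\,s.
\]

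The main obstacle is to rule out $|s|<1$, that is, to show that $H$ misses the ball $B_\varepsilon(x)$. The separation hypothesis says that the closed sets $\{H\in S^d \ :\ H\cap\mathrm{supp}(\mu_i)\neq\emptyset \text{ for all } i\}$ and $\{H\in S^d \ :\ x\in H\}$ are disjoint in the parameter space $S^d$ of oriented hyperplanes (parametrized as in the proof of Theorem~\ref{colHS}); by compactness they are at positive distance, so for $\varepsilon$ small enough every hyperplane meeting $B_\varepsilon(x)$ misses $\mathrm{supp}(\mu_i)$ for some~$i$. For such an~$i$ one has $|\mu_i(H^+) - \mu_i(H^-)| = \mu_i(\R^d)$, whereas the displayed identity gives $|\mu_i(H^+) - \mu_i(H^-)| = |2\alpha_i-1|\,\mu_i(\R^d)\,|s| < \mu_i(\R^d)$ since $\alpha_i\in(0,1)$ forces $|2\alpha_i-1|<1$. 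This contradiction forces $H\cap B_\varepsilon(x)=\emptyset$, so $s=\pm 1$. Reversing the orientation of $H$ if necessary (which swaps the sign of $s$ and of $C$), we may assume $s=1$; the displayed identity together with $\mu_i(H^+)+\mu_i(H^-)=\mu_i(\R^d)$ then yields $\mu_i(H^+)=\alpha_i\,\mu_i(\R^d)$ for every $i\in[d]$, as required.
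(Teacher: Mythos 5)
Your proof is correct and follows the paper's overall strategy---augment the $\mu_i$ with an auxiliary measure concentrated near $x$, invoke Theorem~\ref{kfhs}, and use the separation hypothesis (via a compactness argument in the $S^d$ parameter space of oriented hyperplanes) to force the resulting common-excess hyperplane off the auxiliary support---but the normalization step differs in a substantive and important way. The paper rescales each $\mu_i$ multiplicatively by $1/(\alpha_i\mu_i(\R^d))$ so that $\mu_i(\R^d)=1/\alpha_i$, and then concludes from $\mu_j(H^+)-\mu_j(H^-)=1$ (for the rescaled measures). But combined with $\mu_j(H^+)+\mu_j(H^-)=1/\alpha_j$ this forces $\mu_j(H^+)=\tfrac12(1+1/\alpha_j)$, which equals the target value $\alpha_j\mu_j(\R^d)=1$ only when $\alpha_j=1$; and in fact no scalar rescaling of the $\mu_i$ can align the targets $(2\alpha_i-1)\mu_i(\R^d)$ to a single common value once the $\alpha_i$ lie on both sides of $\tfrac12$, since those quantities then have opposite signs. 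Your additive normalization $\tilde\mu_i=\mu_i+c_i\nu$ with $c_i=C_0-(2\alpha_i-1)\mu_i(\R^d)$ circumvents exactly this: once the hyperplane is forced off $B_\varepsilon(x)$ so that $s=\nu(H^+)-\nu(H^-)=\pm 1$, the identity $\mu_i(H^+)-\mu_i(H^-)=(2\alpha_i-1)\mu_i(\R^d)\,s$ delivers the desired $\alpha_i$-cut after flipping the orientation if needed. This is a cleaner and, as far as I can tell, fully correct implementation of the underlying idea, working uniformly for all $\alpha_i\in(0,1)$.
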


\begin{proof}
    Normalize the measures $\mu_i$ such that $\alpha_i\mu_i(\R^d) = 1$ for all~$i$ by dividing each $\mu_i$ by $\alpha_i\mu_i(\R^d)$. In particular, after this normalization $\mu_i(\R^d) = \frac{1}{\alpha_i} > 1$ for all~$i$. 
    By a standard compactness argument there is an $\varepsilon > 0$ such that any hyperplane that intersects $B_\varepsilon(x)$ does not intersect the supports of all~$\mu_i$. Thus we may construct a Borel measure $\mu_{d+1}$ on $\R^d$ supported in $B_\varepsilon(x)$ with continuous density and with $\mu_{d+1}(\R^d) = 1$. Now apply Theorem~\ref{kfhs} to this collection, which yields a hyperplane $H$ with $\mu_i(H^+)- \mu_i(H^-)=\mu_j(H^+) - \mu_j(H^-)$ for all $i,j\in[d+1]$. The hyperplane $H$ cannot intersect the supports of all measures $\mu_1, \dots, \mu_{d+1}$. Let $i\in [d+1]$ such that $H$ is disjoint from the support of~$\mu_i$. If $i \ne d+1$ then $\mu_i(H^+)-\mu_i(H^-) = \mu_i(\R^d) > 1$, but $|\mu_{d+1}(H^+)-\mu_{d+1}(H^-)| \le 1$, so $\mu_i(H^+)- \mu_i(H^-)\ne \mu_{d+1}(H^+) - \mu_{d+1}(H^-)$. Thus $i = d+1$. This implies $\mu_j(H^+) - \mu_j(H^-) = \mu_{d+1}(H^+) - \mu_{d+1}(H^-) =1$, which finishes the proof.
\end{proof}

Notice that Corollary~\ref{cor:bhj} contains the result of B\'{a}r\'{a}ny, Hubard, Jer\'{o}nimo as a special case, provided that for any family $\{K_1, \dots, K_d\}$ of compact, convex sets in~$\R^d$ that are well-separated, there is an $x\in \R^d$ such that $\{K_1,\dots, K_d, \{x\}\}$ is well-separated. We have been unable to show this.

\section{Colorful Borsuk--Ulam theorems for higher symmetry}

The methods we have used here to prove colorful results for symmetric set coverings of spheres generalize easily to other settings. Here we prove generalizations for free $\Z/p$-actions on spheres, $p$ a prime. Below $s\in\Z/p$ acts on $(j,t) \in [d] \times \Z/p$ by $s\cdot (j,t) = (j, t+s)$.

\begin{thm}
\label{thm:p-fold-kyfan}
    Let $p$ be a prime. Let $d\ge1$ and $n = (p-1)d-1$ be integers. Fix some free $\Z/p$-action on~$S^n$. Let $\Sigma$ be a $\Z/p$-equivariant triangulation of $S^n$ with vertex set~$V$. Let $\ell\colon V \to [d] \times \Z/p$ be $\Z/p$-equivariant. Fix $s_1, \dots, s_d \in \Z/p$. Then either there is a $(p-1)$-face $\sigma$ of $\Sigma$ with $\ell(\sigma) = \{j\} \times \Z/p$ for some $j \in [d]$ or there is a facet $\sigma$ of $\Sigma$ with $\ell(\sigma) = \{(j, s) \ : \ j \in [d], \ s \in \Z/p \setminus \{s_j\}\}$.
\end{thm}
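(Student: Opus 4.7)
The plan is to mimic the proof of Theorem~\ref{newkf} but replace the boundary of the crosspolytope by an appropriate $\Z/p$-equivariant model of~$S^n$, and replace the Borsuk--Ulam surjectivity corollary (Corollary~\ref{cor:surj}) by the Dold-type statement of Lemma~\ref{lem:dold}. The natural target complex is $K = (\partial\Delta_{p-1})^{*d}$, the $d$-fold join of the boundary of the $(p-1)$-simplex, where the vertex set of the $j$th join factor is identified with $\{j\}\times \Z/p$ and $\Z/p$ acts on each factor by cyclic rotation $s\cdot(j,t) = (j,t+s)$.

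First I would observe that $K$ is an $n$-dimensional simplicial complex whose geometric realization is homeomorphic to $S^n$: each $\partial\Delta_{p-1}$ is homeomorphic to~$S^{p-2}$, and joins of spheres satisfy $S^{a}*S^{b}\cong S^{a+b+1}$, so iterating yields $S^{d(p-2)+(d-1)} = S^{(p-1)d-1} = S^n$. Next I would verify that the diagonal $\Z/p$-action on~$K$ is free: a point $z = \sum_{j=1}^d t_j y_j$ with $\sum t_j = 1$, $t_j \ge 0$, and $y_j \in |\partial\Delta_{p-1}|$ is fixed by $s \in \Z/p\setminus\{0\}$ only if $s\cdot y_j = y_j$ whenever $t_j > 0$; but the only point of $\R^p$ fixed by a nontrivial cyclic shift and lying in $\Delta_{p-1}$ is the barycenter, which is interior and hence not in $\partial\Delta_{p-1}$, so no $y_j$ is fixed. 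Thus at least one $t_j > 0$ forces a contradiction, so the action is free.

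Now assume $\Sigma$ has no $(p-1)$-face $\sigma$ with $\ell(\sigma) = \{j\}\times\Z/p$. Then for every face $\sigma \in \Sigma$, the image $\ell(\sigma) \subseteq [d]\times\Z/p$ does not contain any full fiber $\{j\}\times\Z/p$, which by definition means $\ell(\sigma)$ is a face of~$K$. Thus $\ell$ induces a simplicial map $L\colon \Sigma \to K$, and since both $\ell$ and the $\Z/p$-action on~$K$ are defined by the same formula $s\cdot(j,t)=(j,t+s)$, the map $L$ is $\Z/p$-equivariant. Viewed as a continuous map $|\Sigma| \to |K|$ between two copies of $S^n$ carrying free $\Z/p$-actions, Lemma~\ref{lem:dold} gives $\deg L \equiv 1 \pmod p$, so $L$ is nonzero in top homology and in particular surjective. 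For any fixed $s_1,\dots,s_d \in \Z/p$, the set $\tau = \{(j,s) : j\in[d],\ s\in \Z/p\setminus\{s_j\}\}$ has exactly $d(p-1) = n+1$ elements and is a facet of~$K$ (it is the join of the facets $\{j\}\times(\Z/p\setminus\{s_j\})$ of each $\partial\Delta_{p-1}$); surjectivity of~$L$ implies that some facet $\sigma$ of~$\Sigma$ satisfies $L(\sigma) = \tau$, i.e., $\ell(\sigma) = \tau$, which is the desired conclusion.

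The main technical obstacle is the freeness of the diagonal $\Z/p$-action on $|K| \cong S^n$, since Lemma~\ref{lem:dold} requires a free action on the target sphere. The argument above works precisely because $p$ is prime, so that a nontrivial cyclic shift on $\R^p$ has only the diagonal line as fixed set; without primality one would have intermediate fixed subspaces meeting $\partial\Delta_{p-1}$ and the action on the join could acquire fixed points. Once freeness is in hand, the rest of the argument is a direct transcription of the $p=2$ proof of Theorem~\ref{newkf}.
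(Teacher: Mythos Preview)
Your proposal is correct and follows essentially the same route as the paper: build the simplicial map to $(\partial\Delta_{p-1})^{*d}\cong S^n$, invoke Lemma~\ref{lem:dold} to get nonzero degree and hence surjectivity, and hit the desired facet. You add the verification that the diagonal $\Z/p$-action on the target is free (using primality of~$p$), a detail the paper leaves implicit but which is indeed needed to apply Lemma~\ref{lem:dold}.
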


\begin{proof}
    If no $(p-1)$-face is labelled with all elements in $\{j\}\times\Z/p$ then $\ell$ induces an equivariant map $\Sigma \to (\partial\Delta_{p-1})^{\ast d}$ by identifying each label $(j,s)$ with the vertex $s$ in the $j$th copy $\partial \Delta_{p-1}$. The $d$-fold join~$(\partial\Delta_{p-1})^{\ast d}$ is a sphere of dimension~$n$. By Lemma~\ref{lem:dold} such an equivariant map will have non-zero degree and thus be surjective. In particular, some face $\sigma$ of $\Sigma$ maps to the facet $\{(j, s) \ : \ j \in [d], \ s \in \Z/p \setminus \{s_j\}\}$ of~$(\partial\Delta_{p-1})^{\ast d}$.
\end{proof}

\begin{thm}
\label{thm:p-cover}
    Let $p$ be a prime. Let $d\ge1$ and $n = (p-1)d-1$ be integers. 
    Let $A_i \subset S^n$ be closed sets for $i \in [d]$ such that $S^n = \bigcup_i (A_i \cup s\cdot A_i \cup \dots \cup s^{p-1}A_i)$. Suppose that $\bigcap_k \varepsilon^k\cdot A_i = \emptyset$ for every $i \in [d]$.
    Then for all $s_1, \dots, s_d \in \Z/p$ we have that $\bigcap_i \bigcap_{s \ne s_i} s\cdot A_i \ne \emptyset$.
\end{thm}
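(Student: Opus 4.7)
The plan is to imitate the proof of Theorem~\ref{thm:set-kyfan} step by step, using Theorem~\ref{thm:p-fold-kyfan} in place of Theorem~\ref{newkf} and a $\Z/p$-equivariant triangulation in place of a centrally symmetric one. The role that ``$A_i\cap(-A_i)=\emptyset$'' played there is now played by the hypothesis $\bigcap_{k=0}^{p-1} s^k\cdot A_i=\emptyset$, which is what will let me forbid the bad ``monochromatic'' $(p-1)$-faces in the conclusion of Theorem~\ref{thm:p-fold-kyfan}.

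First, I would choose the mesh. Because each $A_i$ is closed in the compact space $S^n$ and $\bigcap_{k=0}^{p-1} s^k\cdot A_i=\emptyset$, a standard compactness argument yields a $\delta_i>0$ such that no set of diameter less than~$\delta_i$ can meet all of $s^0\cdot A_i, s^1\cdot A_i, \dots, s^{p-1}\cdot A_i$. Let $\delta=\min_i \delta_i>0$. Fix a free $\Z/p$-action on~$S^n$ (any one will do, since the hypothesis and the conclusion refer to the same action), pick some $\Z/p$-equivariant triangulation of~$S^n$, and refine it by iterated barycentric subdivisions to obtain a $\Z/p$-equivariant triangulation $T_\delta$ all of whose faces have diameter smaller than~$\delta$.

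Next, I would define an equivariant labelling $\ell\colon V(T_\delta)\to [d]\times \Z/p$ as follows. Choose a set of orbit representatives $F\subseteq V(T_\delta)$ for the $\Z/p$-action. For each $v\in F$, use the covering hypothesis to pick some pair $(i,k)$ with $v\in s^k\cdot A_i$, and set $\ell(v)=(i,k)$. Extend by equivariance: for $t\in\Z/p$, put $\ell(t\cdot v)=(i,k+t)$. This is well-defined because the action on~$V(T_\delta)$ is free (any free action on a triangulation descends to a free action on vertices), and it is equivariant by construction. By the choice of $\delta$, no $(p-1)$-face $\sigma$ of $T_\delta$ can carry the label set $\{j\}\times\Z/p$ for any $j\in[d]$: if it did, the vertices of $\sigma$ would form a set of diameter $<\delta$ meeting every $s^k\cdot A_j$, contradicting the choice of~$\delta_j$.

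Now I apply Theorem~\ref{thm:p-fold-kyfan} with the prescribed signs $s_1,\dots,s_d\in\Z/p$. This produces a facet $\sigma$ of $T_\delta$ whose label set is exactly $\{(j,s)\ :\ j\in[d],\ s\in \Z/p\setminus\{s_j\}\}$. Let $x_\delta$ be the barycenter of such a facet; then for every $j\in[d]$ and every $s\in\Z/p\setminus\{s_j\}$, the point $x_\delta$ is within distance $\delta$ of the set $s\cdot A_j$. Letting $\delta\to 0$ along a sequence and passing to an accumulation point $x\in S^n$ (which exists by compactness), closedness of each $s\cdot A_j$ gives $x\in s\cdot A_j$ for all $j\in[d]$ and all $s\in\Z/p\setminus\{s_j\}$, i.e., $x\in \bigcap_{i=1}^{d}\bigcap_{s\ne s_i} s\cdot A_i$, as required.

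The only delicate step is the construction of the equivariant labelling: one must pick labels on a fundamental domain and extend, rather than label each vertex greedily, to guarantee $\Z/p$-equivariance of $\ell$. Everything else is a direct translation of the proof of Theorem~\ref{thm:set-kyfan} to the $\Z/p$-equivariant setting.
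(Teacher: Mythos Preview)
Your proof is correct and follows essentially the same approach as the paper's: choose a fine $\Z/p$-equivariant triangulation so that no small face can meet an entire orbit $\{s^k\cdot A_i\}_{k}$, equivariantly label vertices by the orbit element containing them, apply Theorem~\ref{thm:p-fold-kyfan} to obtain a facet with the target label set, and pass to the limit via compactness. Your explicit construction of the equivariant labelling on a fundamental domain (and the verification that $t\cdot v\in s^{k+t}\cdot A_i$) is in fact more careful than the paper's version, which simply asserts such a labelling exists.
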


\begin{proof}
    Assume that $\bigcap_k s^k\cdot A_i = \emptyset$ for all~$i$. Let $T_\varepsilon$ be a $\Z/p$-symmetric triangulation of~$S^d$ such that each facet has diameter less than~$\varepsilon$, where $\varepsilon >0$ is chosen such that any set of diameter less than $\varepsilon$ intersects at most $p-1$ of the sets $A_i, s\cdot A_i, \dots, s^{p-1}A_i$. This can be achieved by taking repeated barycentric subdivisions of a given $\Z/p$-symmetric triangulation. 

    Let $\ell\colon V(T_{\varepsilon})\to [d] \times \Z/p$ be a labelling of the vertices of~$T_\varepsilon$ such that $\ell(v) = (i,g)$ only if $v \in g\cdot A_i$. By our choice of~$\varepsilon$, there is no face $\sigma$ with $\ell(\sigma) = \{j\} \times \Z/p$. By Theorem~\ref{thm:p-fold-kyfan} there is a facet labelled precisely by the set $\{(i, s) \ : \ i \in [d], \ s \in \Z/p \setminus \{s_i\}\}$. Let $x_\varepsilon$ be the barycenter of some such facet. As $\varepsilon$ approaches zero, by compactness of~$S^d$, the $x_\varepsilon$ have an accumulation point~$x$. Since the $A_i$ are closed, we have that $\bigcap_i \bigcap_{s\ne s_i} s\cdot A_i$.
\end{proof}

\begin{rem}
\label{rem:n++}
    The proof of Theorem~\ref{thm:p-fold-kyfan} shows that if in Theorem~\ref{thm:p-fold-kyfan} we increase $n$ by one, that is, $n = (p-1)d$, then the first alternative will always occur: There is a face $\sigma$ of $\Sigma$ labelled with an entire $\Z/p$-orbit, that is, $\ell(\sigma) = \{j\} \times \Z/p$ for some $j \in [d]$.

    Similarly, for Theorem~\ref{thm:p-cover}, if $n = (p-1)d$ then it is impossible that $\bigcap_k s^k\cdot A_i = \emptyset$ for every $i \in [d]$.
\end{rem}

\begin{cor}
\label{cor:kyfan-dold}
    Let $p$ be a prime. Let $d \ge 1$ and $n = (p-1)d-1$ be integers. Let $f \colon S^n \to \R^d$ be continuous. Then there is a $\Z/p$-orbit $x, s\cdot x, \dots, s^{p-1}\cdot x$ such that $f$ maps $p-1$ points in this orbit to the same point $y$ in~$\R^d$ and the remaining point to $y-(\alpha, \dots, \alpha)$ for some $\alpha \in \R$.
\end{cor}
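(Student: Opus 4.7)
The plan is to derive the corollary by combining Theorem~\ref{thm:p-cover} with a Bourgin--Yang-style dimension bound on the projection of $f$ modulo the diagonal direction in $\R^d$. For each $i \in [d]$, let
$$A_i = \{x \in S^n : f_i(x) \le f_i(s^t x) \text{ for all } t \in \Z/p\}$$
be the closed subset where the coordinate function $f_i$ attains its orbit minimum. Each $A_i$ is closed by continuity and $\bigcup_{t \in \Z/p} s^t A_i = S^n$ since the orbit minimum of $f_i$ is always attained. Assuming first that $\bigcap_{k \in \Z/p} s^k A_i = \emptyset$ for every $i$, I would apply Theorem~\ref{thm:p-cover} with $s_1 = \dots = s_d = 0$ to obtain $z \in S^n$ lying in $s^t A_i$ for every $i \in [d]$ and every $t \in \Z/p \setminus \{0\}$. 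Unpacking this condition, $f_i$ attains its orbit minimum $m_i$ at $s^{-t} z$ for every $t \ne 0$; writing $y = (m_1, \dots, m_d)$, the $p-1$ points $s^{-t} z$ with $t \ne 0$ all satisfy $f(s^{-t} z) = y$, while $f(z) = y + \alpha$ with $\alpha_i \ge 0$ for every $i$.

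To upgrade $\alpha \ge 0$ to the statement that $\alpha$ is a scalar multiple of $(1, \dots, 1)$, I would apply a Bourgin--Yang-style bound to the composition $\pi \circ f : S^n \to \R^{d-1}$, where $\pi : \R^d \to \R^d / \langle (1, \dots, 1) \rangle \cong \R^{d-1}$ is the projection onto the complement of the diagonal. Since $n = (p-1)d - 1 \ge (p-1)(d-1)$, an argument along the lines of Theorem~\ref{thm:p-fold-kyfan} combined with Remark~\ref{rem:n++} shows that the $\Z/p$-invariant subset $Z \subseteq S^n$ of points on whose $\Z/p$-orbit $\pi \circ f$ is constant is nonempty of covering dimension at least $p - 2$. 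For any $x \in Z$ the $f$-images on the orbit of $x$ lie on a single line parallel to $(1, \dots, 1)$, so any displacement between orbit images is a scalar multiple of $(1, \dots, 1)$. Combined with the inequality $\alpha_i \ge 0$ from the first step, this forces $\alpha$ to be a nonnegative multiple of $(1, \dots, 1)$, which rewritten as $f(z) = y - (-\alpha_0, \dots, -\alpha_0)$ yields the conclusion.

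The main obstacle is synchronizing the two steps so that the point $z$ produced by Theorem~\ref{thm:p-cover} actually lies in~$Z$. I would attempt this either by restricting Theorem~\ref{thm:p-cover} to a covering of the $\Z/p$-subspace~$Z$ (verifying that $Z$ retains enough equivariant index for the underlying labeling argument of Theorem~\ref{thm:p-fold-kyfan} to go through) or by constructing a single equivariant map $S^n \to V$ into an appropriate $\Z/p$-representation whose zero set simultaneously encodes orbit minimality and the line-orbit constraint. In the alternative case where the hypothesis of Theorem~\ref{thm:p-cover} fails for some $i_0$, so $\bigcap_k s^k A_{i_0} \ne \emptyset$, the coordinate $f_{i_0}$ is constant on an entire orbit, and the statement reduces by induction on~$d$ to the analogous assertion for the remaining $d-1$ coordinates restricted to that orbit, with the base case $d = 1$ following directly from Theorem~\ref{thm:p-cover} applied to a single closed set.
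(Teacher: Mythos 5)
Your definition of the sets $A_i$ is genuinely different from the paper's, and this difference is the root of the gap. The paper defines $A_i$ to be the set of $x$ satisfying \emph{two} conditions simultaneously: (i) $f_i(x) \ge f_i(s^k\cdot x)$ for all $k$ (orbit extremum), \emph{and} (ii) $\diam(f_i(G\cdot x)) \ge \diam(f_j(G\cdot x))$ for all $j\in[d]$ (maximal fluctuation). You use only the extremum condition. It is exactly condition (ii) that produces the diagonal displacement: when Theorem~\ref{thm:p-cover} yields a point $x$ lying in $\bigcap_i\bigcap_{k\ne p-1} s^k\cdot A_i$, membership in $s^k\cdot A_i$ for each $i$ forces $\diam(f_i(G\cdot x)) \ge \diam(f_j(G\cdot x))$ for all $j$, and since this holds for every $i$, all the diameters are equal. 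Combined with (i), the remaining orbit point is displaced by $(\alpha,\dots,\alpha)$ with $\alpha$ this common diameter. The same double condition disposes of the degenerate case for free: if some $x\in\bigcap_k s^k\cdot A_i$, then $f_i$ is constant on the orbit so $\diam(f_i(G\cdot x))=0$, and (ii) forces all the other diameters to vanish too, giving a full orbit collapse ($\alpha=0$). No second theorem and no induction are needed.

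Your proposal, by contrast, only reaches ``$\alpha_i\ge 0$ for all $i$'' and then must repair the conclusion with a separate Bourgin--Yang-type coincidence argument for $\pi\circ f$. The synchronization step you flag as the main obstacle is in fact not resolvable by the two routes you sketch. The coincidence set $Z$ of $\pi\circ f$ has a lower bound on its $\Z/p$-coindex, but it is not a sphere and does not carry a $\Z/p$-equivariant triangulation in general, so Theorem~\ref{thm:p-cover} (which is a combinatorial labelling result on triangulated spheres) cannot simply be ``restricted to $Z$.'' The ``single equivariant map'' alternative is too vague to evaluate and, if carried out as a pure index argument, would only produce full orbit collapse on a high-dimensional coincidence set rather than the Ky Fan-style refinement ($p-1$ coincident points plus a prescribed displacement of the last) that the corollary asserts at the critical dimension $n=(p-1)d-1$.

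Your treatment of the alternative case ($\bigcap_k s^k A_{i_0}\ne\emptyset$ for some $i_0$) is also flawed: knowing that $f_{i_0}$ is constant on \emph{one particular} orbit gives no control over the remaining coordinates on that orbit, and the proposed ``induction on $d$ restricted to that orbit'' does not parse --- the sphere dimension $n=(p-1)d-1$ is fixed by hypothesis, so dropping a coordinate does not land you in the $(d-1)$-dimensional version of the statement, and a single $p$-point orbit is not a sphere on which the labelling machinery could run. Modifying your $A_i$ to include the paper's diameter condition would eliminate both gaps at once.
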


\begin{proof}
    For $x \in S^n$ denote its $\Z/p$-orbit $\{x, s\cdot x, \dots, s^{p-1}\cdot x\}$ by $G\cdot x$. Denote the $i$th coordinate function of $f\colon S^n \to \R^d$ by $f_i \colon S^n \to \R$. Let $x \in S^n$. We place $x \in A_i$ if $\diam(f_i(G\cdot x)) \ge \diam(f_j(G\cdot x))$ for all $j \in [d]$ and $f_i(x) \ge f_i(s^k \cdot x)$ for all $k \in [p]$. Thus $x \in A_i$ if $f_i$ fluctuates at least as much on the orbit of $x$ as any other coordinate function~$f_j$, and additionally $f_i(x)$ is the largest value in the orbit of~$x$. As both of these values have to maximized somewhere $S^n = \bigcup_i (A_i \cup s\cdot A_i \cup \dots \cup s^{p-1}A_i)$.

    Suppose there is some point $x$ in $\bigcap_k s^k\cdot A_i$. Then $f_i(x) = f_i(s \cdot x) = \dots = f_i(s^{p-1}\cdot x)$ and $0 = \diam(f_i(G\cdot x)) \ge \diam(f_j(G\cdot x))$ for all $j \in [d]$. This implies $f(x) = f(s\cdot x) = \dots = f(s^{p-1}x)$. Otherwise by Theorem~\ref{thm:p-cover} there is some $x$ in $\bigcap_i \bigcap_{k \ne p-1} s^k\cdot A_i$. Then $f(x) = f(s\cdot x) = \dots = f(s^{p-2} \cdot x)$ and $\diam(f_i(G\cdot x)) = \diam(f_j(G\cdot x))$ for any $i,j \in [d]$. Since the first $p-1$ points in $G\cdot x$ are mapped to the same point, we have that $f_i(s^{p-1}\cdot x) = f_i(x) - \diam(f_i(G\cdot x))$ for all $i \in [d]$.
\end{proof}

\begin{rem}
    For $p=2$ and $f\colon S^{d-1} \to \R^d$ an odd map, Corollary~\ref{cor:kyfan-dold} asserts that $f$ maps a pair of antipodal points to $f(x) = y$ and $f(-x) = y - (\alpha, \dots, \alpha)$. Since $f$ is odd, $f(-x) = -y$ and thus $(\alpha, \dots, \alpha) = 2y$, that is, the corollary asserts that $f$ maps a pair of antipodal points to the $1$-dimensional diagonal in~$\R^d$.

    Following Remark~\ref{rem:n++} the proof of Corollary~\ref{cor:kyfan-dold} shows that for $n \ge (p-1)d$, we get that any continuous $f\colon S^n \to \R^d$ maps an entire $\Z/p$-orbit to the same point. This is a classical result of Bourgin--Yang and others \cite{Dold1983,yang1954,yang1955,Bourgin1955}. Corollary~\ref{cor:kyfan-dold} extends this orbit collapsing result in the same fashion that Ky Fan's theorem extends the Borsuk--Ulam theorem.
\end{rem}

We can now derive a colorful generalization of Theorem~\ref{thm:p-cover} in the same way that we showed the colorful generalization (Theorem~\ref{colkf}) Fan's theorem.

\begin{thm}
\label{thm:col-p-kyfan}
    Let $p$ be a prime. Let $d\ge1$ and $n = (p-1)d-1$ be integers. 
    Let $A_i^{(j)} \subset S^n$ be closed sets with $i \in [d]$ and $j \in [n+1]$ such that $S^n = \bigcup_i (A_i^{(j)}\cup s\cdot A_i^{(j)} \cup \dots \cup s^{p-1}\cdot A_i^{(j)})$ for every~$j \in [n+1]$. Suppose that $\bigcap_k s^k\cdot A_i^{(j_k)} = \emptyset$ for every $i \in [d]$ and for pairwise distinct $j_1, \dots, j_p \in [n+1]$.
    Then for all $s_1, \dots, s_d \in \Z/p$ we have that $\bigcap_i \bigcap_{s \ne s_i} s\cdot A_i^{(\pi(i, s))} \ne \emptyset$ for some bijection $\pi\colon \{(i,s) \in [d] \times \Z/p \ : \ s \ne s_i\} \to [n+1]$.
\end{thm}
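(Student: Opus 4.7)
The plan is to mirror the derivation of Theorem~\ref{colkf} from Theorem~\ref{newkf}, substituting Theorem~\ref{thm:p-fold-kyfan} for Theorem~\ref{newkf} and recording both a label and a group element at each barycentric vertex. Fix the free $\Z/p$-action on $S^n$ induced by the standard action on $(\partial\Delta_{p-1})^{\ast d} \cong S^n$; iterated barycentric subdivisions give, for every $\varepsilon > 0$, a $\Z/p$-equivariant triangulation $T_\varepsilon$ of $S^n$ all of whose faces have diameter at most $\varepsilon$. Using closedness of the $A_i^{(j)}$, compactness of $S^n$, and the hypothesis that $\bigcap_{k=0}^{p-1} s^k \cdot A_i^{(j_k)} = \emptyset$ for every $i$ and every pairwise distinct $j_0, \dots, j_{p-1}$, choose $\varepsilon$ small enough that no subset of $S^n$ of diameter at most $\varepsilon$ meets all of $s^0 \cdot A_i^{(j_0)}, \dots, s^{p-1} \cdot A_i^{(j_{p-1})}$ for any such tuple; there are only finitely many tuples $(i; j_0, \dots, j_{p-1})$ to check.

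Pass to the barycentric subdivision $T'_\varepsilon$ with its induced free $\Z/p$-action. For each vertex $v$ of $T'_\varepsilon$ let $k(v) \in [n+1]$ denote one plus the dimension of the face of $T_\varepsilon$ that $v$ subdivides; note that $k(g \cdot v) = k(v)$ for all $g \in \Z/p$. Build a $\Z/p$-equivariant labeling $\ell \colon V(T'_\varepsilon) \to [d] \times \Z/p$ orbit by orbit: from each $\Z/p$-orbit pick a representative $v$ together with $(i, h) \in [d] \times \Z/p$ such that $v \in h \cdot A_i^{(k(v))}$ (which exists by the covering hypothesis), set $\ell(v) = (i, h)$, and extend equivariantly by $\ell(g \cdot v) = (i, g + h)$. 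This is consistent because $g \cdot v \in (g + h) \cdot A_i^{(k(v))}$.

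The choice of $\varepsilon$ rules out any $(p-1)$-face $\sigma$ of $T'_\varepsilon$ from carrying label set $\{i\} \times \Z/p$: such a face corresponds to a chain $\tau_0 \subset \dots \subset \tau_{p-1}$ of faces of $T_\varepsilon$ of pairwise distinct dimensions, all contained in $\tau_{p-1}$, which has diameter at most $\varepsilon$. A label set $\{i\} \times \Z/p$ would exhibit pairwise distinct $k_0, \dots, k_{p-1} \in [n+1]$ and a bijection $h_0, \dots, h_{p-1}$ onto $\Z/p$ such that $\tau_{p-1}$ meets each $h_r \cdot A_i^{(k_r)}$, contradicting the choice of $\varepsilon$. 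Theorem~\ref{thm:p-fold-kyfan}, applied with the prescribed $s_1, \dots, s_d$, thus produces a facet $\sigma_\varepsilon$ of $T'_\varepsilon$ whose label set is exactly $\{(i, s) \in [d] \times \Z/p : s \ne s_i\}$.

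Finally let $x_\varepsilon$ be the barycenter of $\sigma_\varepsilon$. The $n+1$ vertices of $\sigma_\varepsilon$ correspond to a maximal flag in the face poset of $T_\varepsilon$, so $k(\cdot)$ restricted to them is a bijection onto $[n+1]$; together with the labeling this yields a bijection $\pi_\varepsilon \colon \{(i,s) : s \ne s_i\} \to [n+1]$ given by $\pi_\varepsilon(i, s) = k(v_{(i,s)})$, where $v_{(i,s)}$ is the unique vertex of $\sigma_\varepsilon$ labeled $(i, s)$, and by construction $v_{(i,s)} \in s \cdot A_i^{(\pi_\varepsilon(i,s))}$. Only finitely many such bijections are possible, so along a subsequence $\varepsilon \to 0$ the $\pi_\varepsilon$ stabilize to some $\pi$ while $x_\varepsilon \to x \in S^n$. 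Each $v_{(i,s)}$ lies within $\varepsilon$ of $x_\varepsilon$ and hence also tends to $x$, and closedness of each $s \cdot A_i^{(\pi(i,s))}$ forces $x \in \bigcap_i \bigcap_{s \ne s_i} s \cdot A_i^{(\pi(i,s))}$. The main subtlety is the $\Z/p$-equivariant labeling construction and ruling out the first alternative in Theorem~\ref{thm:p-fold-kyfan}; both are direct generalizations of the $p=2$ arguments in the proof of Theorem~\ref{colkf}.
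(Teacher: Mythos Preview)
Your proof is correct and follows essentially the same approach as the paper's: barycentrically subdivide a fine equivariant triangulation, label each new vertex via the covering indexed by the dimension of the face it subdivides, rule out the first alternative in Theorem~\ref{thm:p-fold-kyfan} using the separation hypothesis, and extract $\pi$ by pigeonhole and compactness. You are in fact slightly more explicit than the paper in constructing the equivariant labeling orbit by orbit (one minor quibble: the free $\Z/p$-action should be the one already implicit in the statement via the notation $s\cdot A$, not one you introduce afresh, though this does not affect the argument).
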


\begin{proof}
    Let $T_\varepsilon$ be a triangulation, where every face has diameter at most~$\varepsilon > 0$. Here $\varepsilon$ is chosen sufficiently small so that no face intersects $s\cdot A_i^{(j_1)}, s^2\cdot A_i^{(j_2)}, \dots, s^p\cdot A_i^{(j_p)}$ for pairwise distinct $j_1, \dots, j_p \in [n+1]$ and any~$i$. Let $T'_\varepsilon$ denote the barycentric subdivision of~$T_\varepsilon$. Let $\ell\colon V(T'_{\varepsilon})\to [d] \times \Z/p$ be a labelling of the vertices of~$T'_\varepsilon$ such that $\ell(v) = (i,g)$ only if $v \in g\cdot A_i^{(k)}$ and $v$ subdivides a $(k-1)$-dimensional face of~$T_\varepsilon$. We may assume that $\ell$ is $\Z/p$-equivariant. 
    
    By our choice of~$\varepsilon$, there is no face $\sigma$ with $\ell(\sigma) = \{j\} \times \Z/p$. By Theorem~\ref{thm:p-fold-kyfan} there is a facet  labelled precisely by the set $\{(i, s) \ : \ i \in [d], \ s \in \Z/p \setminus \{s_i\}\}$. Let $x_\varepsilon$ be the barycenter of some such facet. As $\varepsilon$ approaches zero, by compactness of~$S^n$, the $x_\varepsilon$ have an accumulation point~$x$. For every $\varepsilon > 0$ we can find a bijection $\pi_\varepsilon\colon \{(i,s) \in [d] \times \Z/p \ : \ s \ne s_i\} \to [n+1]$ such that $x_\varepsilon$ is at distance less than $\varepsilon$ from the sets $g\cdot A_i^{(\pi(i, g))}$ with $i \in [d]$ and $g \ne g_i$. Since there are finitely many bijection $\pi\colon \{(i,s) \in [d] \times \Z/p \ : \ s \ne s_i\} \to [n+1]$, as $\varepsilon \to 0$ one such bijection will be realized infinitely many times. Call this bijection~$\pi$. Since the $A_i^{(j)}$ are closed, we have that $x\in \bigcap_i \bigcap_{s \ne s_i} s\cdot A_i^{(\pi(i, s))}$.
\end{proof}

\begin{rem}
    Theorem~\ref{thm:col-p-kyfan} has the colorful Borsuk--Ulam theorem (Theorem~\ref{thm:colorfulBU}) as a corollary: The case $p=2$ specializes to Theorem~\ref{colkf}.
\end{rem}


\begin{thebibliography}{10}

\bibitem{AharoniBerger2009}
Ron Aharoni and Eli Berger, \emph{Rainbow matchings in $r$-partite $r$-graphs},
  Electronic J. Combin. \textbf{16}(1) (2009), R119.

\bibitem{axenovich2004}
Maria Axenovich and Dmitri Fon-Der-Flaass, \emph{On rainbow arithmetic
  progressions}, Electronic J. Combin. \textbf{11} (2004), no.~1, R1.

\bibitem{Bajmoczy1979}
Ervin~G. Bajm\'{o}czy and Imre B\'{a}r\'{a}ny, \emph{On a common generalization
  of {B}orsuk's and {R}adon's theorem}, Acta Math. Acad. Sci. Hungar.
  \textbf{34} (1979), 347--350.

\bibitem{Barany1981}
Imre B\'{a}r\'{a}ny, \emph{A generalization of {C}arath\'{e}odory's theorem},
  Discrete Math. \textbf{40} (1981), 141--152.

\bibitem{barany1989}
Imre B{\'a}r{\'a}ny, Zolt{\'a}n F{\"u}redi, and L{\'a}szl{\'o} Lov{\'a}sz,
  \emph{On the number of halving planes}, Combinatorica \textbf{10} (1990),
  no.~2, 175--183.

\bibitem{BaranyHubardJeronimo2008}
Imre B\'{a}r\'{a}ny, Alfredo Hubard, and Jes\'{u}s Jer\'{o}nimo, \emph{Slicing
  convex sets and measures by a hyperplane}, Discrete Comput. Geom. (2008),
  67--75.

\bibitem{beyer2004}
William~A. Beyer and Andrew Zardecki, \emph{The early history of the ham
  sandwich theorem}, Amer. Math. Monthly \textbf{111} (2004), no.~1, 58--61.

\bibitem{bjorner1995}
Anders Bj{\"o}rner, \emph{Topological methods}, Handbook of combinatorics,
  vol.~2, Amsterdam, 1995, pp.~1819--1872.

\bibitem{blagojevic2015}
Pavle V.~M. Blagojevi{\'c}, Benjamin Matschke, and G{\"u}nter~M. Ziegler,
  \emph{Optimal bounds for the colored {T}verberg problem}, J. Europ. Math.
  Soc. \textbf{17} (2015), no.~4, 739--754.

\bibitem{blagojevic2017}
Pavle V.~M. Blagojevi{\'c} and G{\"u}nter~M. Ziegler, \emph{{Beyond the
  Borsuk--Ulam theorem: the topological Tverberg story}}, A Journey Through
  Discrete Mathematics: A Tribute to Ji{\v{r}}{\'\i} Matou{\v{s}}ek (2017),
  273--341.

\bibitem{Borsuk1933}
Karol Borsuk, \emph{Drei {S}\"{a}tze \"{u}ber die $n$-dimensionale euklidische
  {S}ph\"{a}re}, Fund. Math. \textbf{20} (1933), 177--190.

\bibitem{Bourgin1955}
David~G. Bourgin, \emph{On some separation and mapping theorems}, Comment.
  Math. Helv. \textbf{29} (1955), 199--214. \MR{72469}

\bibitem{Caratheodory1911}
Constantin Carath\'{e}odory, \emph{Über den {V}ariabilitätsbereich der
  {F}ourier’schen {K}onstanten von positiven harmonischen {F}unktionen},
  Rend. Circ. Mat. Palermo \textbf{32} (1911), 193--217.

\bibitem{LoeraGoaocMeunierMustafa2017}
Jes\'us De~Loera, Xavier Goaoc, Frédéric Meunier, and Nabil Mustafa,
  \emph{The discrete yet ubiquitous theorems of {C}arath\'eodory, {H}elly,
  {S}perner, {T}ucker, and {T}verberg}, Bull. Amer. Math. Soc. \textbf{56}
  (2019), 415–511.

\bibitem{deLongueville2013}
Mark De~Longueville, \emph{A course in topological combinatorics}, Springer
  Science \& Business Media, 2013.

\bibitem{Dold1983}
Albrecht Dold, \emph{Simple proofs of some {B}orsuk-{U}lam results}, Contemp.
  Math. \textbf{19} (1983), 65--69.

\bibitem{Fan1952}
Ky~Fan, \emph{A generalization of {T}ucker's combinatorial lemma with
  topological applications}, Ann. Math. \textbf{56} (1952), no.~3, 431--437.

\bibitem{FrickHoustonEdwardsMeunier2017}
Florian Frick, Kelsey Houston-Edwards, and Fr{\'e}d{\'e}ric Meunier,
  \emph{Achieving rental harmony with a secretive roommate}, Amer. Math.
  Monthly \textbf{126} (2017), 18--32.

\bibitem{FrickZerbib2019}
Florian Frick and Shira Zerbib, \emph{Colorful coverings of polytopes and
  piercing numbers of colorful $d$-intervals}, Combinatorica \textbf{39}
  (2019), 627--637.

\bibitem{fujita2010}
Shinya Fujita, Colton Magnant, and Kenta Ozeki, \emph{Rainbow generalizations
  of {R}amsey theory: a survey}, Graphs Combin. \textbf{26} (2010), 1--30.

\bibitem{Gale1984}
David Gale, \emph{Equilibrium in a discrete exchange economy with money},
  Internat. J. Game Theory \textbf{13} (1984), no.~1, 61--64.

\bibitem{jungic2003}
Veselin Jungic, Jacob Licht, Mohammad Mahdian, Jaroslav Nesetril, and Rados
  Radoicic, \emph{Rainbow arithmetic progressions and anti-{R}amsey results},
  Combin. Probab. Comput. \textbf{12} (2003), no.~5-6, 599--620.

\bibitem{kalai2005}
Gil Kalai and Roy Meshulam, \emph{A topological colorful {H}elly theorem}, Adv.
  Math. \textbf{191} (2005), no.~2, 305--311.

\bibitem{KnasterKuratowskiMazurkiewicz1929}
Bronislaw Knaster, Kazimierz Kuratowski, and Stefan Mazurkiewicz,
  \emph{Equilibrium in a discrete exchange economy with money}, Fund. Math.
  \textbf{14} (1929), no.~1, 132--137.

\bibitem{kozlov2008}
Dimitry Kozlov, \emph{Combinatorial algebraic topology}, vol.~21, Springer
  Science \& Business Media, 2008.

\bibitem{kushkuley2006}
Alexander~M. Kushkuley and Zalman~I. Balanov, \emph{Geometric methods in degree
  theory for equivariant maps}, Lecture Notes in Mathematics, Springer, 2006.

\bibitem{Matousek2003book}
Ji\v{r}\'{i} Matou\v{s}ek, \emph{Using the {Borsuk--Ulam} theorem},
  Springer--Verlag Berlin Heidelberg, 2003.

\bibitem{MeunierSu2019}
Fr{\'e}d{\'e}ric Meunier and Francis~Edward Su, \emph{Multilabeled versions of
  {S}perner's and {F}an's lemmas and applications}, SIAM J. Appl. Algebra Geom.
  \textbf{3} (2019), 391--411.

\bibitem{Musin2012}
Oleg~R. Musin, \emph{Extensions of {S}perner and {T}ucker's lemma for
  manifolds}, J. Comb. Theory, Ser. A \textbf{132} (2012), 172--187.

\bibitem{Nash51}
John Nash, \emph{Non-cooperative games}, Ann. Math. \textbf{54} (1951), no.~2,
  286--295.

\bibitem{shih1993}
Mau-Hsiang Shih and Shyh-Nan Lee, \emph{Combinatorial formulae for multiple
  set-valued labellings}, Math. Ann. \textbf{296} (1993), no.~1, 35--61.

\bibitem{StoneTukey1942}
Arthur~H. Stone and John~W. Tukey, \emph{{Generalized “sandwich”
  theorems}}, Duke Math. J. \textbf{9} (1942), no.~2, 356--359.

\bibitem{Su1999}
Francis~Edward Su, \emph{Rental harmony: Sperner's lemma in fair division},
  Amer. Math. Monthly \textbf{106} (1999), no.~10, 930--942.

\bibitem{Zivaljevic2010}
Rade \v{Z}ivaljevi\'{c}, \emph{Oriented matroids and {K}y {F}an’s theorem},
  Combinatorica \textbf{30} (2010), 471--484.

\bibitem{yang1954}
Chung-Tao Yang, \emph{On theorems of {Borsuk--Ulam},
  {Kakutani-Yamabe-Yujob{\^o} and Dyson}, {I}}, Ann. Math. \textbf{60} (1954),
  no.~2, 262--282.

\bibitem{yang1955}
Chung-Tao Yang, \emph{On theorems of {Borsuk--Ulam}, {Kakutani-Yamabe-Yujob{\^o} and
  Dyson}, {II}}, Ann. Math. \textbf{62} (1955), no.~2, 271--283.

\bibitem{zivaljevic1992}
Rade {\v{Z}}ivaljevi{\'c} and Sini{\v{s}}a Vre{\'c}ica, \emph{The colored
  {T}verberg's problem and complexes of injective functions}, J. Combin.
  Theory, Ser. A \textbf{61} (1992), no.~2, 309--318.

\end{thebibliography}
\end{document}